\theoremstyle{plain}
  \declaretheorem[numberwithin=section]{theorem}
  \declaretheorem[numberlike=theorem]{corollary}
  \declaretheorem[numberlike=theorem]{lemma}
\theoremstyle{definition}
  \declaretheorem[numberlike=theorem]{example}
  \declaretheorem[numberlike=theorem]{remark}
\newcommand{\assign}{:=}
\newcommand{\rightarrowlim}{\mathop{\rightarrow}\limits}
\newcommand{\textdots}{...}
\newcommand{\tmtt}[1]{\texttt{#1}}
\begin{document}

\title{Partitions with Durfee triangles of fixed size}

\author{
  N. Guru Sharan
  \and
  Armin Straub
}

\date{July 23, 2025}

\maketitle

\begin{abstract}
  A well-studied statistic of an integer partition is the size of its Durfee
  square. In particular, the number $D_k (n)$ of partitions of $n$ with Durfee
  square of fixed size $k$ has a well-known simple rational generating
  function. We study the number $R_k (n)$ of partitions of $n$ with Durfee
  triangle of size $k$ (the largest subpartition with parts $1, 2, \ldots,
  k$). We determine the corresponding generating functions which are rational
  functions of a similar form. Moreover, we explicitly determine the leading
  asymptotic of $R_k (n)$, as $n \rightarrow \infty$.
\end{abstract}

\section{Introduction}

A partition of an integer $n \geq 0$ is an ordered list $\lambda =
(\lambda_1, \lambda_2, \ldots, \lambda_d)$ of integers $\lambda_1 \geq
\lambda_2 \geq \cdots \geq \lambda_d \geq 1$ such that $n =
\lambda_1 + \lambda_2 + \cdots + \lambda_d$. The $\lambda_j$ are called the
parts of $\lambda$. For nice introductions to partitions, we refer to
\cite{andrews-part} and \cite{ae-partitions}. Recall that the Ferrers
board of the partition $\lambda$ consists of $d$ left-aligned rows such that
the $j^{\text{th}}$ row contains $\lambda_j$ boxes. Its Durfee square is the
largest square within the Ferrers board. We say that the Durfee square of
$\lambda$ has size $k$ if its sides have length $k$ (in which case it consists
of $k \times k$ boxes). Equivalently, $k$ is the largest integer such that
$\lambda_j \geq k$ for all $j \in \{ 1, 2, \ldots, k \}$. Let $D_k (n)$
be the number of partitions of $n$ whose Durfee square has size $k$. It is
well-known that
\begin{equation}
  \sum_{n = 0}^{\infty} D_k (n) q^n = \frac{q^{k^2}}{(1 - q)^2 (1 - q^2)^2
  \cdots (1 - q^k)^2} \label{eq:Dk:gf:rat:intro}
\end{equation}
(see, for instance, \cite[p.~28]{andrews-part} or
\cite[Section~8.1]{ae-partitions}).

As illustrated in Figure~\ref{fig:durfee}, one can likewise consider the
\emph{Durfee triangle} of a partition $\lambda$ as the largest right-angled
isosceles triangle within the Ferrers board whose apex is the top-left corner.
We say that the Durfee triangle of $\lambda$ has size $k$ if its horizontal
and vertical sides have length $k$ each. Equivalently, $k$ is the largest
integer such that $\lambda_j > k - j$ for all $j \in \{ 1, 2, \ldots, k \}$.
This notion of Durfee triangle was recently introduced by the first author
\cite{sharan-rook}, who showed that the size of the Durfee triangle of
$\lambda$ equals the maximal number of non-intersecting rooks that can be
placed on the Ferrer's board of $\lambda$. In the present paper, we study the
number $R_k (n)$ of partitions of $n$ whose Durfee triangle has size $k$. Our
first main result is that the generating functions $\mathcal{F}_k (q)$ of $R_k
(n)$ take a form that is similar to the generating functions
\eqref{eq:Dk:gf:rat:intro} for $D_k (n)$, though more complicated and less
explicit.

\begin{theorem}
  \label{thm:Rk:gf:rat:intro}For any fixed positive integer $k$, we have
  \begin{equation}
    \mathcal{F}_k (q) \assign \sum_{n = 0}^{\infty} R_k (n) q^n = \frac{q^{k
    (k + 1) / 2} \varphi_k (q)}{(1 - q) (1 - q^2) \cdots (1 - q^k)}
    \label{eq:Rk:gf:rat:intro}
  \end{equation}
  where
  \begin{equation*}
    \varphi_k (q) = 1 + c_1 q + \cdots + c_{k^2 - 1} q^{k^2 - 1} + (- 1)^{k -
     1} q^{k^2}
  \end{equation*}
  is a polynomial of degree $k^2$ with integer coefficients. Moreover, if $k$
  is odd then $\varphi_k (- 1) = 0$.
\end{theorem}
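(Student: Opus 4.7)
The plan is to decompose each partition $\lambda$ with $\lambda \supseteq \delta_k$ into the staircase $\delta_k$ itself (contributing $q^{k(k+1)/2}$), an excess sequence $a = (a_1, \ldots, a_k)$ with $a_j := \lambda_j - (k-j+1) \geq 0$, and a below-staircase partition $\mu := (\lambda_{k+1}, \lambda_{k+2}, \ldots)$. The partition condition $\lambda_j \geq \lambda_{j+1}$ for $j < k$ translates to $a_{j+1} \leq a_j + 1$, while $\lambda_k \geq \lambda_{k+1}$ becomes $\mu_1 \leq a_k + 1$. This gives
\[
T_k (q) := \sum_{\lambda \supseteq \delta_k} q^{|\lambda|} = q^{k(k+1)/2}\sum_{(a,\mu)} q^{|a|+|\mu|},
\]
and the crucial observation is that in these coordinates, the condition $\mathrm{DT}(\lambda) \geq k+1$ corresponds precisely to ``$a_j \geq 1$ for every $j$ and $\mu \neq \emptyset$''. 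Hence $\mathcal{F}_k(q) = T_k(q) - T_{k+1}(q)$ is exactly the sum over $(a,\mu)$ violating this conjunction.

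Next I would introduce the auxiliary series $V_k(q) := \sum_a q^{|a|}$ over valid $a$ of length $k$. Via the substitution $\gamma_j := a_j + (k - j)$, valid $a$'s biject with partitions $\gamma$ of length $\leq k$ containing $(k-1, k-2, \ldots, 1, 0)$, and a short induction on $k$ shows that $V_k(q) \cdot (q;q)_k$ is a polynomial of degree $k(k-1)$ with constant term $1$ and leading coefficient $(-1)^{k-1}$. To assemble $\mathcal{F}_k(q)$, I would condition on the smallest index $i \in \{1, \ldots, k+1\}$ with $\lambda_i = k-i+1$ (with $i = k+1$ meaning $\mu = \emptyset$ and all $a_j \geq 1$). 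For each $i$, the first $i-1$ rows contribute $q^{i-1}V_{i-1}(q)$ (after shifting $a_j \mapsto a_j - 1$), while the remaining $k-i$ rows together with $\mu$ yield a constrained sum that reduces, again via the $\gamma$-substitution, to a $V$-type series weighted by $1/(q;q)_{m+1}$. Summing over $i$, combining over the common denominator $(q;q)_k$, and simplifying produces $\mathcal{F}_k(q) = q^{k(k+1)/2}\varphi_k(q)/(q;q)_k$. Matching the top-degree contribution, which arises from the $i = k+1$ case $q^{k(k+1)/2+k}V_k(q)$, against the leading term of $V_k(q)(q;q)_k$ from Step 2 fixes the leading coefficient of $\varphi_k$ as $(-1)^{k-1}$; and $\varphi_k(0) = 1$ is immediate from $R_k(k(k+1)/2) = 1$.

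The main obstacle is the bookkeeping in this final assembly: verifying that the spurious $(1-q)$-factors appearing in the individual case contributions cancel, leaving exactly $(q;q)_k$ in the denominator, and extracting the numerator as a polynomial of degree $k^2$ with integer coefficients. For the final assertion that $\varphi_k(-1) = 0$ when $k$ is odd, I would exploit the identity $\varphi_k(q) = q^{-k(k+1)/2}(q;q)_k\,\mathcal{F}_k(q)$: for odd $k \geq 3$ the factor $(q;q)_k$ already vanishes at $q = -1$ (contributed by $1 - q^2$), so showing that the rational function $\mathcal{F}_k(q)$ is regular at $q = -1$ (either by exhibiting a parity-reversing involution on partitions with $\mathrm{DT} = k$ of consecutive sizes, or by direct inspection of the simplified numerator emerging from Step 3) forces $(1+q) \mid \varphi_k(q)$; the case $k = 1$ is verified directly from $\varphi_1(q) = 1 + q$.
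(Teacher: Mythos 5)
Your overall skeleton is the same as the paper's: condition on the first row whose excess over the staircase vanishes (your index $i$ is the paper's $d+1$), let the head contribute $q^{i-1}V_{i-1}(q)$, let the tail contribute another factor of the same type, assemble over the common denominator $(q;q)_k$ via $q$-binomial coefficients, and read off the top coefficient from the extreme case. However, there are three genuine gaps at exactly the points where the paper has to work. First, the claim that ``a short induction on $k$'' shows $V_k(q)\,(q;q)_k$ is a polynomial does not close in one variable: summing the innermost geometric series in the chain $a_{j+1}\le a_j+1$ produces a term $q^{a_{k-1}+2}$ that re-weights the previous coordinate, so $V_k(q)$ is not expressible through $V_{k-1}(q)$ alone. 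The paper resolves this with a multivariate refinement $A_d(x_1,\dots,x_d)$ and the recursion $A_d=\bigl(A_{d-1}(x_1,\dots,x_{d-1})-x_d^2A_{d-1}(x_1,\dots,x_{d-1}x_d)\bigr)/(1-x_d)$; some such auxiliary variable is unavoidable. Second, your description of the tail as ``the remaining $k-i$ rows together with $\mu$'' factoring into a $V$-type series times $1/(q;q)_{m+1}$ does not work as stated: the constraint is $\mu_1\le a_k+1$, so the $\mu$-sum is $1/(q;q)_{a_k+1}$ with $a_k$ \emph{unbounded}, and the resulting denominators are not divisors of $(q;q)_k$. The missing idea is to \emph{conjugate} the entire sub-diagram from row $i$ downward: its column excesses $n_1,\dots,n_{k-i+1}$ satisfy the same chain conditions $n_{j+1}\le n_j+1$, so the tail is exactly $V_{k-i+1}(q)$ and the convolution $\mathcal{F}_k(q)=q^{T_k}\sum_d q^dA_d(q)A_{k-d}(q)$ follows cleanly.

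Third, your plan for the final assertion is based on a false premise: for odd $k\ge 5$ the function $\mathcal{F}_k(q)$ is \emph{not} regular at $q=-1$ (e.g.\ $(1+q)^2$ exactly divides $(q;q)_5$ while only a single factor $1+q$ divides $\varphi_5(q)$, so $F_5$ retains a simple pole at $-1$). You only need one factor of $1+q$ in $\varphi_k$, i.e.\ that the pole order drops by one, and regularity is strictly stronger and generally fails. The clean argument, available once you have the convolution sum, is the symmetry $d\leftrightarrow k-d$: it gives $\varphi_k(q)=q^k\sum_d\binom{k}{d}_q q^{-d}\alpha_d(q)\alpha_{k-d}(q)$, whence $\varphi_k(-1)=(-1)^k\varphi_k(-1)$ and $\varphi_k(-1)=0$ for odd $k$. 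Your identification of the top-degree term with the $i=k+1$ case is correct, but you should also verify (by the degree count $k^2-(k-d)(d+1)$ for the $d$-th summand) that no other summand reaches degree $k^2$.
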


The initial cases $k \in \{ 1, 2, 3, 4, 5 \}$ were individually considered in
\cite{sharan-rook} and Theorem~\ref{thm:Rk:gf:rat:intro} extends this to all
$k$ in a uniform manner. Theorem~\ref{thm:Rk:gf:rat:intro} is proved in
Section~\ref{sec:Fk} as Theorem~\ref{thm:Fk:denom} (which is stated in terms
of the normalized variation $F_k (q) = q^{- k (k + 1) / 2} \mathcal{F}_k (q)$)
and Lemma~\ref{lem:Fk:denom:odd}. The proof proceeds from an explicit
representation of the generating function $\mathcal{F}_k (q)$ as a multiple
sum which is discussed in Section~\ref{sec:Fk:convolution}. This $q$-multisum
is a convolution sum of a related simpler $q$-series $A_d (q)$ which is
further analyzed in Section~\ref{sec:Ad}.

\begin{figure}[h]
  \centering
  \begin{tikzpicture}[scale=0.8, thick]
    \draw[pattern=north west lines, pattern color=gray!60] (1,0) rectangle (3,-2);

    \foreach \row/\count in {0/6, 1/4, 2/2, 3/1} {
      \foreach \col in {1,...,\count} {
            \draw (\col,-\row) rectangle ++(1,-1);
        }
    }
  \end{tikzpicture}
  \qquad
  \begin{tikzpicture}[scale=0.8, thick]
    \draw[pattern=north west lines, pattern color=gray!60, draw=gray!60] (1,0) -- (5,0) -- (1,-4) -- cycle;

    \foreach \row/\count in {0/6, 1/4, 2/2, 3/1} {
      \foreach \col in {1,...,\count} {
            \draw (\col,-\row) rectangle ++(1,-1);
        }
    }
  \end{tikzpicture}
  
  \caption{\label{fig:durfee}Durfee square and triangle (of size $2$ and $4$)
  of $\lambda = (6, 4, 2, 1)$}
\end{figure}
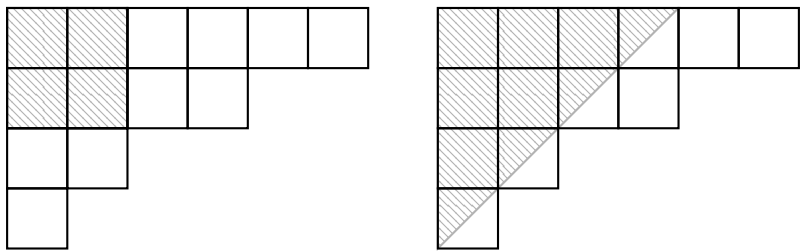

As a consequence of the generating functions \eqref{eq:Rk:gf:rat:intro}, the
sequences $R_k (n)$ are constant recursive, or $C$-finite, of order $k (k + 1)
/ 2$ (and the order can be reduced by $1$ if $k \geq 3$ is odd). This is
discussed in Section~\ref{sec:Rk:formulas}. It further follows (see
Corollary~\ref{cor:Rk:qp}) that the values $R_k (n)$, for fixed $k$ and all $n
> k^2$, are given by a quasi-polynomial of degree $k - 1$ and quasi-period
$\operatorname{lcm} (1, 2, \ldots, k)$. Analogously, Choliy and Sills
\cite{cs-durfeesquare} deduce from \eqref{eq:Dk:gf:rat:intro} that $D_k (n)$
is a quasi-polynomial of degree $2 k - 1$ and quasi-period $\operatorname{lcm} (1, 2,
\ldots, k)$. Moreover, they show that the leading terms are
\begin{equation}
  D_k (n) = \frac{1}{(k!)^2} \frac{n^{2 k - 1}}{(2 k - 1) !} - \frac{1}{2 k!
  (k - 2) !} \frac{n^{2 k - 2}}{(2 k - 2) !} + O (n^{2 k - 3}) .
  \label{eq:Dk:asy:intro}
\end{equation}
Additional terms of the asymptotic expansion as $n \rightarrow \infty$ can be
worked out algorithmically as observed by Sills and Zeilberger
\cite{sz-pmn-quasi}. Establishing the asymptotics for $R_k (n)$ is more
intricate because the generating functions \eqref{eq:Rk:gf:rat:intro} are less
explicit than \eqref{eq:Dk:gf:rat:intro} when working with general $k$. Our
second main result is to determine the leading term in
Section~\ref{sec:Rk:formulas}.

\begin{theorem}
  \label{thm:Rk:asy:intro}Fix a positive integer $k$. As $n \rightarrowlim
  \infty$, we have
  \begin{equation}
    R_k (n) = \frac{2^k}{k!} \frac{n^{k - 1}}{(k - 1) !} + O (n^{k - 2}) .
    \label{eq:Rk:asy:intro}
  \end{equation}
\end{theorem}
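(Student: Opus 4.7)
The plan is to extract the leading asymptotic of $R_k(n)$ from the rational generating function $\mathcal{F}_k(q)$ supplied by Theorem~\ref{thm:Rk:gf:rat:intro} via partial fraction decomposition. Since the denominator $\prod_{j=1}^{k}(1-q^j)$ vanishes only at roots of unity, every singularity of $\mathcal{F}_k(q)$ lies on the unit circle. The pole at $q=1$ has order at most $k$ (one from each factor $1-q^j$), while a pole at a primitive $s$-th root of unity (with $2 \leq s \leq k$) has order at most $\lfloor k/s \rfloor \leq k/2$.

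Each pole of order $m$ at a root of unity $\zeta$ contributes a term of the form $P(n)\,\zeta^{-n}$ to $R_k(n) = [q^n]\mathcal{F}_k(q)$, where $P$ is a polynomial of degree $m-1$; since $|\zeta^{-n}| = 1$, this contribution is bounded by $O(n^{m-1})$. Consequently, the total contribution of the poles away from $q=1$ is $O(n^{\lfloor k/2 \rfloor - 1}) \subseteq O(n^{k-2})$ for $k \geq 2$ (the case $k=1$ being immediate). Assuming the pole at $q=1$ attains the maximal order $k$, i.e.\ $\varphi_k(1) \neq 0$, the dominant contribution is $c_k \cdot n^{k-1}/(k-1)!$, where
\begin{equation*}
c_k = \lim_{q\to 1}(1-q)^k \mathcal{F}_k(q) = \varphi_k(1)\,\lim_{q\to 1}\prod_{j=1}^{k}\frac{1-q}{1-q^j} = \frac{\varphi_k(1)}{k!},
\end{equation*}
using the factorization $1 - q^j = (1-q)(1 + q + \cdots + q^{j-1})$ evaluated at $q = 1$. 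The theorem therefore reduces to the identity $\varphi_k(1) = 2^k$.

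Establishing $\varphi_k(1) = 2^k$ is the main obstacle, since Theorem~\ref{thm:Rk:gf:rat:intro} alone does not determine this value. My approach would be to return to the multisum / convolution representation of $\mathcal{F}_k(q)$ developed in Sections~\ref{sec:Fk:convolution} and~\ref{sec:Ad}: after multiplying through by $\prod_{j=1}^{k}(1 - q^j)$ and dividing by $q^{k(k+1)/2}$, each factor $A_d(q)$ in the convolution should reduce at $q = 1$ to an explicit combinatorial quantity, so that the whole expression collapses to a discrete sum expected to evaluate to $2^k$ — either via a direct combinatorial identity or via an inductive relation such as $\varphi_k(1) = 2\,\varphi_{k-1}(1)$ extracted from a recursion on the multisum. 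The prefactor $2^k$ is suggestive of a binary choice at each of the $k$ staircase steps bordering the Durfee triangle, and making this heuristic precise is the crux of the argument.
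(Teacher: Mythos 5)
Your reduction of the theorem to the single identity $\varphi_k(1)=2^k$ is correct and follows the same route as the paper: all poles of $\mathcal{F}_k(q)$ lie at roots of unity, the pole at $q=1$ has order $k$ while every other pole has order at most $\lfloor k/2\rfloor$, so the non-principal poles contribute only $O(n^{\lfloor k/2\rfloor-1})$, and the leading coefficient is $\varphi_k(1)/\bigl(k!\,(k-1)!\bigr)$ since $(q;q)_k/(1-q)^k\to k!$ as $q\to 1$. That part is fine.

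However, the evaluation $\varphi_k(1)=2^k$ is precisely the nontrivial content of the theorem, and your proposal leaves it as a heuristic (``expected to evaluate to $2^k$'', ``making this heuristic precise is the crux''), so there is a genuine gap. The paper closes it in two steps. First, the convolution of Lemma~\ref{lem:F:gf} together with $A_d(q)=\alpha_d(q)/(q;q)_d$ gives
\begin{equation*}
  \varphi_k(q)=\sum_{d=0}^{k}\binom{k}{d}_q q^d\,\alpha_d(q)\,\alpha_{k-d}(q),
\end{equation*}
so that $\varphi_k(1)=\sum_{d=0}^{k}\binom{k}{d}\alpha_d(1)\alpha_{k-d}(1)$, and the claim reduces to $\alpha_d(1)=1$ for all $d$ — this is the precise form of your intuition that ``each factor $A_d(q)$ reduces at $q=1$ to an explicit quantity''. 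Second, $\alpha_d(1)=1$ is itself not immediate: the recursion of Lemma~\ref{lem:Ax:rec} involves division by $1-x_d$, so one cannot simply substitute $q=1$. The paper instead proves it asymptotically, by sandwiching $p_d(n)\leq a_d(n)\leq p_d(n+T_d)$ (where $p_d(n)$ counts partitions into at most $d$ parts) and invoking the classical Laguerre--Schur asymptotics $p_d(n)\sim n^{d-1}/\bigl(d!\,(d-1)!\bigr)$; comparing with the general fact that $[q^n]\,\gamma_d(q)/(q;q)_d\sim \gamma_d(1)\,n^{d-1}/\bigl(d!\,(d-1)!\bigr)$ forces $\alpha_d(1)=1$. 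Your alternative suggestion of an induction $\varphi_k(1)=2\varphi_{k-1}(1)$ ``extracted from a recursion on the multisum'' is not obviously easier than proving $\alpha_d(1)=1$, and no such recursion is exhibited; as written, the argument is incomplete at exactly this point.
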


It would be desirable to further analyze the generating functions
\eqref{eq:Rk:gf:rat:intro} and, as an application, to determine additional
terms in the asymptotic expansion of $R_k (n)$. Various other avenues for
future work are outlined in the final section.

\section{A convolution sum for \texorpdfstring{$\mathcal{F}_k (q)$}{Fk(q)}}\label{sec:Fk:convolution}

We begin by describing the generating function $\mathcal{F}_k (q)$ in terms of
a convolution sum involving the following simpler $q$-series which we study in
more detail in the next section:
\begin{equation}
  A_d (q) \assign \sum_{k_1 = 0}^{\infty} \sum_{k_2 = 0}^{k_1 + 1} \cdots
  \sum_{k_d = 0}^{k_{d - 1} + 1} q^{k_1 + k_2 + \cdots + k_d} = \sum_{n =
  0}^{\infty} a_d (n) q^n, \label{eq:Aq:def}
\end{equation}
where $d$ is a positive integer. We further set $A_0 (q) = 1$. Throughout, let
\begin{equation*}
  T_k = 1 + 2 + \cdots + k = \frac{1}{2} k (k + 1)
\end{equation*}
be the $k^{\text{th}}$ triangular number.

\begin{lemma}
  \label{lem:F:gf}We have
  \begin{equation}
    \mathcal{F}_k (q) = q^{T_k} \sum_{d = 0}^k q^d A_d (q) A_{k - d} (q) .
    \label{eq:F:gf}
  \end{equation}
\end{lemma}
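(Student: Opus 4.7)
The plan is to partition the set of partitions $\lambda$ with Durfee triangle exactly $k$ into $k+1$ classes indexed by a parameter $d \in \{0, 1, \ldots, k\}$, and show that class $d$ contributes precisely $q^{T_k}\cdot q^d A_d(q) A_{k-d}(q)$ to $\mathcal{F}_k(q)$.

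To define $d = d(\lambda)$, consider the $k+1$ cells $c_j := (j, k-j+2)$ for $1 \leq j \leq k$ and $c_{k+1} := (k+1, 1)$; these are exactly the cells one would need to add in order to enlarge the Durfee triangle from size $k$ to size $k+1$. Let $d$ be the largest integer with $c_1, \ldots, c_d \in \lambda$. Since the Durfee triangle has size exactly $k$ and not $k+1$, some $c_j$ is missing, forcing $d \leq k$. For a partition with $d(\lambda) = d$, the top $d$ rows take the form $\lambda_j = (k-j+1) + r_j$ with $r_j \geq 1$ and $r_j \leq r_{j-1} + 1$; the condition $c_{d+1} \notin \lambda$ combined with the Durfee triangle constraint forces $\lambda_{d+1} = k-d$ exactly when $d < k$, and $\lambda_{k+1} = 0$ when $d = k$; and the rows from index $d+1$ onward form a partition $\tilde{\lambda}$ with $\tilde{\lambda}_1 = k-d$, which automatically has Durfee triangle of size exactly $k-d$ (since $\tilde{\lambda}_1 = k - d$ rules out a larger triangle).

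Substituting $k_j = r_j - 1$ turns $(r_1, \ldots, r_d)$ into an unrestricted $A_d$-sequence, so the top $d$ rows contribute $q^{dk - T_{d-1} + d} A_d(q)$. Using the identity $T_k = (dk - T_{d-1}) + T_{k-d}$, the class-$d$ contribution to $\mathcal{F}_k(q)$ becomes
\[
q^{T_k}\cdot q^d A_d(q)\cdot \sum_{\tilde{\lambda}} q^{|\tilde{\lambda}| - T_{k-d}},
\]
where the inner sum is over partitions $\tilde{\lambda}$ whose Durfee triangle has size exactly $k-d$ and whose first part equals $k-d$.

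To match the asserted formula, one must show that this inner sum equals $A_{k-d}(q)$. Decomposing such a $\tilde{\lambda}$ once more (triangle of size $m := k-d$, right-arm sequence $(\tilde{r}_1, \ldots, \tilde{r}_m)$ with $\tilde{r}_1 = 0$ forced by $\tilde{\lambda}_1 = m$, and bottom-arm partition $\tilde{\nu}$ with parts $\leq \tilde{r}_m + 1$) reduces the task to the identity
\[
A_m(q) \;=\; \sum_{\substack{(\tilde{r}_1, \ldots, \tilde{r}_m):\; \tilde{r}_1 = 0\\ 0 \leq \tilde{r}_j \leq \tilde{r}_{j-1} + 1}} \frac{q^{\tilde{r}_1 + \cdots + \tilde{r}_m}}{(q; q)_{\tilde{r}_m + 1}},
\]
which I expect to be the main obstacle. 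My plan is to establish it either by induction on $m$, showing that both sides satisfy a common recurrence obtained by conditioning on the final entry, or via a direct bijection that absorbs the bottom-arm partition $\tilde{\nu}$ into the initial entries of an unrestricted $A_m$-sequence; this is the sort of structural statement about $A_m$ that fits naturally in the analysis of Section~\ref{sec:Ad}. Once the identity is established, summing the class-$d$ contributions over $d \in \{0, 1, \ldots, k\}$ yields the claimed formula.
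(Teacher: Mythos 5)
Your decomposition---peel off the $d$ rows that strictly overhang the triangle, leaving a tail partition $\tilde\lambda$ whose largest part and Durfee triangle both equal $k-d$---is sound, and the exponent bookkeeping via $T_k=(dk-T_{d-1})+T_{k-d}$ is correct. It is, however, a genuinely different route from the paper's: there the excess nodes are split symmetrically into a right arm (row excesses $m_1,\dots,m_d$, giving $q^dA_d(q)$) and a bottom arm read off the \emph{conjugate} (column excesses $n_1,\dots,n_{k-d}$, giving $A_{k-d}(q)$), so that both factors are literally the defining multisum \eqref{eq:Aq:def} and no further identity is needed.

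The price of your asymmetric decomposition is exactly the step you flag as the main obstacle, and as written it is a genuine gap: the claim that the tail sum equals $A_{k-d}(q)$ is never proved. But you do not need the reformulation involving $(q;q)_{\tilde r_m+1}$, nor an induction on $m$: the needed fact is precisely that $q^{T_m}A_m(q)$ is the generating function for partitions with largest part exactly $m$ and Durfee triangle of size $m$. Conjugation preserves the Durfee triangle size (having size at least $m$ means containing the self-conjugate staircase $(m,m-1,\dots,1)$, and diagram containment is conjugation-invariant) and exchanges ``largest part exactly $m$'' with ``exactly $m$ parts''; a partition $\mu$ with exactly $m$ parts containing the staircase corresponds, via $k_j=\mu_j-(m-j+1)\ge 0$, to exactly the tuples with $k_j\le k_{j-1}+1$ counted by $A_m(q)$. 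Closing the gap this way essentially reinstates the conjugate-symmetric viewpoint that the paper builds in from the outset. One further small point: your parenthetical only justifies that the Durfee triangle of $\tilde\lambda$ has size \emph{at most} $k-d$; the matching lower bound comes from $\lambda_i>k-i$ for $d+1\le i\le k$, which should be said explicitly.
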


\begin{proof}
  Let $\lambda = (\lambda_1, \lambda_2, \ldots)$ with $\lambda_1 \geq
  \lambda_2 \geq \ldots$ be a partition with Durfee triangle of size $k$,
  and let $\lambda' = (\lambda_1', \lambda_2', \ldots)$ be its conjugate. This
  implies that $\lambda_j \geq k - j + 1$ (as well as $\lambda_j'
  \geq k - j + 1$) for $j \in \{ 1, 2, \ldots, k \}$. Choose $d \in \{ 0,
  1, \ldots, k \}$ to be maximal such that $\lambda_j > k - j + 1$ for all $j
  \in \{ 1, 2, \ldots, d \}$, and set $m_j = \lambda_j - (k - j + 1)$. In
  other words, $d$ is the number of initial rows in the Ferrers diagram of
  $\lambda$ that are strictly longer than the Durfee triangle, and $m_j
  \geq 1$ is the amount by which the length of the $j^{\text{th}}$ row
  exceeds the Durfee triangle. Note that we have $m_1 \geq 1$, $1
  \leq m_2 \leq m_1 + 1$, {\textdots}, $1 \leq m_d \leq
  m_{d - 1} + 1$.
  
  On the other hand, set $n_j = \lambda_j' - k + j - 1 \geq 0$ for $j \in
  \{ 1, 2, \ldots, k \}$ to be the amount by which the length of the
  $j^{\text{th}}$ column in the Ferrers diagram of $\lambda$ exceeds the
  Durfee triangle. Note that we have $n_1 \geq 0$, $0 \leq n_2
  \leq n_1 + 1$, {\textdots}, $0 \leq n_{k - d} \leq n_{k - d -
  1} + 1$ (and that, by the definition of $d$, we have $n_{k - d + 1} = 0$
  provided that $d \neq 0$). See Figure~\ref{fig:durfee:decomp} for a diagram
  illustrating the situation.
  
  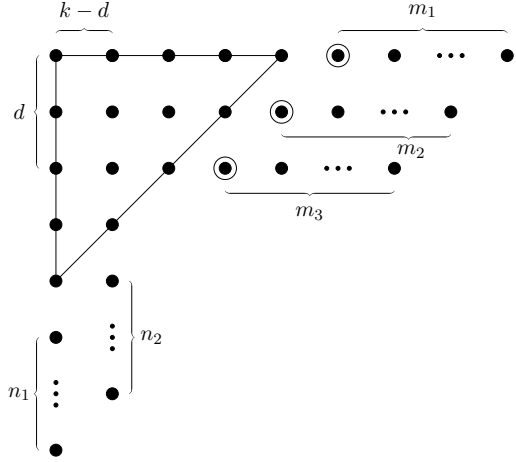
\begin{figure}[h]
 	\centering
    \begin{tikzpicture}[description/.style={fill=white,inner sep=2pt}]
      \useasboundingbox (-5.5,-2) rectangle (1.5,4.5);
      \scope[transform canvas={scale=0.75}]
      \filldraw[black] (-6,1.5) circle (3pt);
      \filldraw[black] (-6,0.5) circle (3pt);
      \filldraw[black] (-6,2.5) circle (3pt);
      \filldraw[black] (-6,3.5) circle (3pt);
      \filldraw[black] (-6,4.5) circle (3pt);
      \filldraw[black] (-5,4.5) circle (3pt);
      \filldraw[black] (-5,2.5) circle (3pt);
      \filldraw[black] (-1.2,2.5) circle (1pt);
      \filldraw[black] (-1,2.5) circle (1pt);
      \filldraw[black] (-0.8,2.5) circle (1pt);
      \filldraw[black] (-3,2.5) circle (3pt);
      \filldraw[black] (-2,2.5) circle (3pt);
      \filldraw[black] (0,2.5) circle (3pt);
      \filldraw[black] (-4,2.5) circle (3pt);
      \filldraw[black] (-5,-0.3) circle (1pt);
      \filldraw[black] (-5,-0.5) circle (1pt);
      \filldraw[black] (-5,-0.7) circle (1pt);
      \filldraw[black] (-5,1.5) circle (3pt);
      \filldraw[black] (-5,0.5) circle (3pt);
      \filldraw[black] (-5,-1.5) circle (3pt);
      \filldraw[black] (-0.2,3.5) circle (1pt);
      \filldraw[black] (0,3.5) circle (1pt);
      \filldraw[black] (0.2,3.5) circle (1pt);
      \filldraw[black] (1,3.5) circle (3pt);
      \filldraw[black] (-1,3.5) circle (3pt);
      \filldraw[black] (-2,3.5) circle (3pt);
      \filldraw[black] (-3,3.5) circle (3pt);
      \filldraw[black] (-5,3.5) circle (3pt);
      \filldraw[black] (-5,4.5) circle (3pt);
      \filldraw[black] (-4,3.5) circle (3pt);
      \filldraw[black] (-4,4.5) circle (3pt);
      \filldraw[black] (-3,4.5) circle (3pt);
      \filldraw[black] (-3,4.5) circle (3pt);
      \filldraw[black] (0.8,4.5) circle (1pt);
      \filldraw[black] (1,4.5) circle (1pt);
      \filldraw[black] (1.2,4.5) circle (1pt);
      \filldraw[black] (2,4.5) circle (3pt);
      \filldraw[black] (-1,4.5) circle (3pt);
      \filldraw[black] (0,4.5) circle (3pt);
      \filldraw[black] (-2,4.5) circle (3pt);
      \filldraw[black] (-6,-1.3) circle (1pt);
      \filldraw[black] (-6,-1.5) circle (1pt);
      \filldraw[black] (-6,-1.7) circle (1pt);
      \filldraw[black] (-6,-0.5) circle (3pt);
      \filldraw[black] (-6,-2.5) circle (3pt);
      \draw (-1,4.5) circle [radius=0.2cm];
      \draw (-2,3.5) circle [radius=0.2cm];
      \draw (-3,2.5) circle [radius=0.2cm];
      \draw (-6,4.5) -- (-2,4.5);
      \draw (-6,4.5) -- (-6,0.5);
      \draw (-2,4.5) -- (-6,0.5);
      \node at (0.5,5.3) {$m_1$};
      \node at (-5.5,5.3) {$k-d$};
      \node at (0.3,2.8) {$m_2$};
      \node at (-1.5,1.7) {$m_3$};
      \node at (-4.3,-0.5) {$n_2$};
      \node at (-6.65,-1.5) {$n_1$};
      \node at (-6.65,3.5) {$d$};
      \draw [decorate, 
      decoration = {calligraphic brace,
          raise=0pt,
          aspect=0.5}] (-1,4.9) --  (2,4.9);
      \draw [decorate, 
      decoration = {calligraphic brace,
          raise=0pt,
          aspect=0.5}] (-6,4.9) --  (-5,4.9);
      \draw [decorate, 
      decoration = {calligraphic brace, mirror, 
          raise=0pt,
          aspect=0.75}] (-2,3.1) --  (1,3.1);
      \draw [decorate, 
      decoration = {calligraphic brace, mirror, 
          raise=0pt,
          aspect=0.5}] (-3,2.1) --  (0,2.1);
      \draw [decorate, 
      decoration = {calligraphic brace,  
          raise=0pt,
          aspect=0.5}] (-4.7,0.5) --  (-4.7,-1.5);
      \draw [decorate, 
      decoration = {calligraphic brace, mirror, 
          raise=0pt,
          aspect=0.5}] (-6.3,-0.5) --  (-6.3,-2.5);
      \draw [decorate, 
      decoration = {calligraphic brace, mirror, 
          raise=0pt,
          aspect=0.5}] (-6.3,4.5) --  (-6.3,2.5);
      \endscope
    \end{tikzpicture}

    \caption{\label{fig:durfee:decomp}Decomposing a partition according to its
    Durfee triangle}
  \end{figure}
  
  The Ferrers diagram of $\lambda$ decomposes into the Durfee triangle, the
  $m_1 + \cdots + m_d$ excess nodes in the first $d$ rows, as well as the $n_1
  + \cdots + n_{k - d}$ excess nodes in the first $k - d$ columns. It follows
  that the generating function $\mathcal{F}_k (q)$ for partitions with Durfee
  triangle of size $k$ is given by
  \begin{equation*}
    q^{T_k} \sum_{d = 0}^k \left[ \sum_{m_1 = 1}^{\infty} \sum_{m_2 = 1}^{m_1
     + 1} \cdots \sum_{m_d = 1}^{m_{d - 1} + 1} q^{m_1 + \cdots + m_d} \right]
     \left[ \sum_{n_1 = 0}^{\infty} \sum_{n_2 = 0}^{n_1 + 1} \cdots \sum_{n_{k
     - d} = 0}^{n_{k - d - 1} + 1} q^{n_1 + \cdots + n_{k - d}} \right] .
  \end{equation*}
  The second bracketed multi-sum is $A_{k - d} (q)$, by the definition of the
  latter. On the other hand, the first bracketed multi-sum can be rewritten as
  \begin{equation*}
    \sum_{m_1 = 1}^{\infty} \sum_{m_2 = 1}^{m_1 + 1} \cdots \sum_{m_d =
     1}^{m_{d - 1} + 1} q^{m_1 + \cdots + m_d} = q^d \sum_{n_1 = 0}^{\infty}
     \sum_{n_2 = 0}^{n_1 + 1} \cdots \sum_{n_d = 0}^{n_{d - 1} + 1} q^{n_1 +
     \cdots + n_d} = q^d A_d (q) .
  \end{equation*}
  Algebraically, this follows by setting $n_i = m_i - 1$; combinatorially,
  this reflects the fact that each of the $d$ rows contains at least one
  excess node and these $d$ first nodes are peeled off into the overall factor
  $q^d$.
  
  In terms of the function $A_d (q)$, we therefore conclude that
  \begin{equation*}
    \mathcal{F}_k (q) = q^{T_k} \sum_{d = 0}^k q^d A_d (q) A_{k - d} (q),
  \end{equation*}
  as claimed.
\end{proof}

\section{The generating function \texorpdfstring{$A_d (q)$}{Ad(q)}}\label{sec:Ad}

Let $A_d (q) = \sum_{n = 0}^{\infty} a_d (n) q^n$ be as defined in
\eqref{eq:Aq:def}. The values of the numbers $a_d (n)$ for small $d$ and $n$
are recorded in the following table:
\begin{equation*}
  \begin{array}{|c|c|c|c|c|c|c|c|c|c|c|c|}
     \hline
     d\backslash n & 0 & 1 & 2 & 3 & 4 & 5 & 6 & 7 & 8 & 9 & 10\\
     \hline
     1 & 1 & 1 & 1 & 1 & 1 & 1 & 1 & 1 & 1 & 1 & 1\\
     \hline
     2 & 1 & 2 & 2 & 3 & 3 & 4 & 4 & 5 & 5 & 6 & 6\\
     \hline
     3 & 1 & 3 & 4 & 6 & 7 & 9 & 11 & 13 & 15 & 18 & 20\\
     \hline
     4 & 1 & 4 & 7 & 11 & 15 & 19 & 25 & 30 & 37 & 44 & 53\\
     \hline
     5 & 1 & 5 & 11 & 19 & 29 & 39 & 53 & 67 & 84 & 103 & 126\\
     \hline
     6 & 1 & 6 & 16 & 31 & 52 & 76 & 107 & 143 & 184 & 233 & 289\\
     \hline
     7 & 1 & 7 & 22 & 48 & 88 & 140 & 207 & 291 & 389 & 508 & 646\\
     \hline
     8 & 1 & 8 & 29 & 71 & 142 & 245 & 384 & 567 & 792 & 1069 & 1401\\
     \hline
   \end{array}
\end{equation*}
By the defining multisum \eqref{eq:Aq:def}, $a_d (n)$ is the number of weak
compositions of $n$ into $d$ parts such that each part does not exceed its
predecessor by more than $1$. In other words, $a_d (n)$ is the number of
tuples $(k_1, k_2, \ldots, k_d)$ of nonnegative integers $k_j$ with $k_1 + k_2
+ \cdots + k_d = n$ such that $k_j \leq k_{j - 1} + 1 $ for $j \in \{ 2,
3, \ldots, d \}$. On the other hand, it follows as in the proof of
Lemma~\ref{lem:F:gf} that $a_d (n)$ equals the number of partitions of $n +
T_d$ into exactly $d$ parts with Durfee triangle of size $d$. In other words,
$q^{T_d} A_d (q)$ can be interpreted as the generating function for partitions
into $d$ parts with Durfee triangle of size $d$.

\begin{example}
  For instance, $a_3 (5) = 9$. The corresponding weak compositions of $5$ into
  $3$ parts such that each part does not exceed its predecessor by more than
  $1$ are
  \begin{equation*}
    (5, 0, 0), (4, 1, 0), (4, 0, 1), (3, 2, 0), (3, 1, 1), (2, 3, 0), (2, 2,
     1), (2, 1, 2), (1, 2, 2) .
  \end{equation*}
  On the other hand, by adding the Durfee triangle $(3, 2, 1)$ componentwise
  to these compositions, we obtain the corresponding partitions of $5 + T_3 =
  11$ into $3$ parts with Durfee triangle of size $3$:
  \begin{equation*}
    (8, 2, 1), (7, 3, 1), (7, 2, 2), (6, 4, 1), (6, 3, 2), (5, 5, 1), (5, 4,
     2), (5, 3, 3), (4, 4, 3) .
  \end{equation*}
\end{example}

\begin{remark}
  The sequences $a_d (n)$ for fixed $d \leq 10$ are recorded in the OEIS
  \cite{sloane-oeis} as the ``number of partitions into $d$ parts such that
  every $i^{\text{th}}$ smallest part (counted with multiplicity) is different
  from $i$''. Here one has to consider partitions of size $n + T_d + d$ and
  the corresponding partitions can be obtained from those of $n + T_d$ into
  $d$ parts with Durfee triangle of size $d$ by simply increasing each part by
  $1$.
\end{remark}

For small values of $d$, we can use the multisum \eqref{eq:Aq:def} to compute
$A_d (q)$ explicitly. For instance, in addition to the trivial $A_1 (q) = 1 /
(1 - q)$, we find
\begin{eqnarray*}
  A_2 (q) & = & \frac{1 + q - q^2}{(1 - q)^2 (1 + q)},\\
  A_3 (q) & = & \frac{1 + 2 q - q^3 - 2 q^4 + q^6}{(1 - q)^3 (1 + q) (1 + q +
  q^2)},\\
  A_4 (q) & = & \frac{(1 + q - q^4) (1 + 2 q - 2 q^4 - q^5 + q^8)}{(1 - q)^4
  (1 + q)^2 (1 + q + q^2) (1 + q^2)} .
\end{eqnarray*}
Firstly, we observe that the denominators of these three rational functions
are $(q ; q)_2$, $(q ; q)_3$ and $(q ; q)_4$, respectively, where $(q ; q)_d$
is the $q$-Pochhammer
\begin{equation}
  (q ; q)_d = \prod_{r = 1}^d (1 - q^r) . \label{eq:qpochhammer}
\end{equation}
The remainder of this section focuses on proving the following result which
shows, in particular, that this observation extends to $A_d (q)$ in general.

\begin{theorem}
  \label{thm:Aq:denom}For all positive integers $d$,
  \begin{equation}
    A_d (q) = \frac{\alpha_d (q)}{(q ; q)_d} \label{eq:Aq:denom}
  \end{equation}
  where $\alpha_d (q) \in 1 + q\mathbb{Z} [q]$ is a polynomial of degree $(d -
  1) d$. Moreover, the coefficient of $q^{(d - 1) d}$ in $\alpha_d (q)$ is $(-
  1)^{d - 1}$.
\end{theorem}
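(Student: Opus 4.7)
The strategy is to derive a clean recurrence for the putative numerator $\alpha_d(q) := (q;q)_d A_d(q)$ by telescoping a $q$-difference relation, then verify the three claimed properties by induction on $d$. The recurrence will feature Gaussian $q$-binomial coefficients, so everything about $\alpha_d$ follows from their standard properties.

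First I would introduce a bivariate auxiliary function by marking the last summation variable: let $a_d^{(m)}(q)$ denote the contribution to $A_d(q)$ from tuples with $k_d = m$ and set $V_d(q,z) := \sum_{m \geq 0} a_d^{(m)}(q)\, z^m$, so that $V_d(q,1) = A_d(q)$ and $V_1(q,z) = 1/(1-qz)$. Splitting the defining multisum \eqref{eq:Aq:def} on the value of $k_d$ and summing the resulting inner geometric series (the constraint $k_{d-1} \geq m-1$ becomes $m \leq k_{d-1}+1$) should yield, for $d \geq 2$,
\[ V_d(q,z) \;=\; \frac{V_{d-1}(q,1) \,-\, (qz)^2\, V_{d-1}(q,qz)}{1-qz}. \]
Setting $z=1$ and iterating the relation (each step rewrites $V_{d-i}(q,q^i)$ in terms of $V_{d-i-1}(q,q^{i+1})$, introducing a sign, a factor $q^{2(i+1)}$, and a denominator $1-q^{i+1}$) telescopes; the partial sums of the even numbers combine into the exponents $q^{i(i+1)}$, and the base case $V_1(q,q^{d-1}) = 1/(1-q^d)$ closes the recursion, producing
\[ A_d(q) \;=\; \sum_{i=0}^{d-2} \frac{(-1)^i\, q^{i(i+1)}}{(q;q)_{i+1}} A_{d-1-i}(q) \;+\; \frac{(-1)^{d-1}\, q^{d(d-1)}}{(q;q)_d}. \]

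Multiplying through by $(q;q)_d$ and recognizing the quotient $(q;q)_d/[(q;q)_{i+1}(q;q)_{d-1-i}] = \binom{d}{i+1}_q$ as the Gaussian $q$-binomial (a polynomial in $\mathbb{Z}[q]$ of degree $(i+1)(d-i-1)$ with constant term $1$ and leading coefficient $1$), the reindexing $j = i+1$ produces
\[ \alpha_d(q) \;=\; \sum_{j=1}^{d-1} (-1)^{j-1}\, q^{j(j-1)} \binom{d}{j}_q \alpha_{d-j}(q) \;+\; (-1)^{d-1}\, q^{d(d-1)}, \]
with $\alpha_0(q) = 1$. Induction on $d$ now delivers the three claims. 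Integrality follows because $\binom{d}{j}_q \in \mathbb{Z}[q]$ and, by the inductive hypothesis, $\alpha_{d-j}(q) \in \mathbb{Z}[q]$. For the constant term, each summand with $j \geq 2$ vanishes at $q=0$ thanks to the factor $q^{j(j-1)}$, so only the $j=1$ summand matters, contributing $\binom{d}{1}_q(0)\,\alpha_{d-1}(0) = 1$ for $d \geq 2$ (while the cases $d \leq 1$ are immediate). For the degree, the $j$-th summand has degree $j(j-1) + j(d-j) + (d-j)(d-j-1) = d(d-1) - j(d-j) \leq (d-1)^2$ for $j \in \{1,\dots,d-1\}$, while the isolated term has the strictly larger degree $d(d-1)$; hence it is not cancelled and supplies both the degree $d(d-1)$ and the leading coefficient $(-1)^{d-1}$.

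The main technical obstacle is the bookkeeping in the telescoping step --- correctly tracking the exponents, denominators, and signs so that the iteration closes on the base case $V_1(q,q^{d-1})$ with the claimed coefficient $(-1)^{d-1} q^{d(d-1)}/(q;q)_d$. Once the recurrence for $\alpha_d$ is in hand, the three inductive verifications are essentially mechanical, because every requested property is forced by the single isolated term $(-1)^{d-1} q^{d(d-1)}$, which is separated from the rest of the recurrence by a degree gap of $d-1$.
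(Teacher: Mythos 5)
Your proposal is correct, and it reaches the theorem by a related but genuinely different route. The paper works with the fully multivariate series $A_d(x_1,\ldots,x_d)$ of \eqref{eq:Ax:def}: it proves the one-step recursion of Lemma~\ref{lem:Ax:rec} (your relation for $V_d(q,z)$ is exactly that recursion specialized to $x_1=\cdots=x_{d-1}=q$, $x_d=qz$), and then establishes the denominator form by a one-step induction on the multivariate numerator $\alpha_d(x_1,\ldots,x_d)$, where polynomiality comes from the observation that the numerator vanishes at $x_d=1$ so the factor $1-x_d$ cancels; the degree and leading coefficient are read off from the top monomial $(-1)^{d-1}(x_1\cdots x_d)^{d-1}$. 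You instead specialize early to the bivariate catalytic form and telescope the recursion all the way down, arriving at the explicit recurrence
\begin{equation*}
  \alpha_d(q) \;=\; \sum_{j=1}^{d-1} (-1)^{j-1}\, q^{j(j-1)} \binom{d}{j}_q \alpha_{d-j}(q) \;+\; (-1)^{d-1}\, q^{d(d-1)},
\end{equation*}
from which polynomiality follows from that of the Gaussian polynomials and the remaining claims from the degree gap $d-1$ isolating the last term; I checked your telescoping bookkeeping (exponents $q^{i(i+1)}$, denominators $(q;q)_{i+1}$, closing term $(-1)^{d-1}q^{d(d-1)}/(q;q)_d$) and the degree computation $j(j-1)+j(d-j)+(d-j)(d-j-1)=d(d-1)-j(d-j)$, and both are right (the recurrence reproduces the displayed $A_2$ and $A_3$). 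The trade-off: the paper's multivariate lemma is a finer structural statement and is reused implicitly elsewhere, while your explicit recurrence is arguably more informative about $\alpha_d(q)$ itself --- for instance, setting $q=1$ in it gives $\alpha_d(1)=\sum_{j=1}^{d}(-1)^{j-1}\binom{d}{j}=1$ immediately, recovering Corollary~\ref{cor:alphad:1} without the asymptotic detour the paper takes.
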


Numerical evidence suggests that Theorem~\ref{thm:Aq:denom} is best possible
in the sense that the quotient on the right-hand side of \eqref{eq:Aq:denom}
is in lowest terms and cannot be further reduced. We have confirmed this for
all $d \leq 20$ by explicitly calculating $A_d (q)$.

In order to prove Theorem~\ref{thm:Aq:denom}, we first give a recursive
characterization of $A_d (q)$. To this end, we introduce the auxiliary
generating functions

\begin{equation}
  \label{eq:Ax:def} A_d (x_1, \ldots, x_d) \assign \sum_{k_1 = 0}^{\infty}
  \sum_{k_2 = 0}^{k_1 + 1} \cdots \sum_{k_d = 0}^{k_{d - 1} + 1} x_1^{k_1}
  \cdots x_d^{k_d} .
\end{equation}

We note that there is no harm in denoting the sums in \eqref{eq:Aq:def} and
\eqref{eq:Ax:def} both with $A_d$ because they are equal in the case $d = 1$.
More generally, we have $A_d (q) = A_d (q, \ldots, q)$, where the right-hand
side features $d$ copies of $q$.

Combined with the base case $A_1 (x_1) = 1 / (1 - x_1)$, the following
recursively determines the series $A_d (x_1, \ldots, x_d)$ for all positive
integers $d$.

\begin{lemma}
  \label{lem:Ax:rec}For integers $d > 1$,
  \begin{equation}
    A_d (x_1, \ldots, x_d) = \frac{A_{d - 1} (x_1, \ldots, x_{d - 1}) - x_d^2
    A_{d - 1} (x_1, \ldots, x_{d - 2}, x_{d - 1} x_d)}{1 - x_d} .
    \label{eq:Ax:rec}
  \end{equation}
\end{lemma}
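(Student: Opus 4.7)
The plan is to prove the identity by evaluating the innermost sum over $k_d$ explicitly and then regrouping. Concretely, in the defining multisum
\begin{equation*}
  A_d (x_1, \ldots, x_d) = \sum_{k_1 = 0}^{\infty} \sum_{k_2 = 0}^{k_1 + 1} \cdots \sum_{k_{d-1}=0}^{k_{d-2}+1} \sum_{k_d = 0}^{k_{d - 1} + 1} x_1^{k_1} \cdots x_{d-1}^{k_{d-1}} x_d^{k_d},
\end{equation*}
only the last factor depends on $k_d$, so the innermost sum is the geometric sum $\sum_{k_d = 0}^{k_{d-1}+1} x_d^{k_d} = (1 - x_d^{k_{d-1}+2})/(1 - x_d)$. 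Substituting this expression and pulling the factor $1/(1-x_d)$ in front of everything gives two multisums indexed over $k_1, \ldots, k_{d-1}$ with the original nested range constraints $0 \le k_j \le k_{j-1}+1$.

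Next I would identify the two resulting pieces. The contribution from the ``$1$'' in $1 - x_d^{k_{d-1}+2}$ is precisely the definition of $A_{d-1}(x_1, \ldots, x_{d-1})$. The contribution from $-x_d^{k_{d-1}+2}$ yields, after pulling out the constant factor $-x_d^2$, a multisum whose summand is $x_1^{k_1} \cdots x_{d-2}^{k_{d-2}} (x_{d-1} x_d)^{k_{d-1}}$, which by the definition \eqref{eq:Ax:def} is $A_{d-1}(x_1, \ldots, x_{d-2}, x_{d-1} x_d)$. Combining these two pieces reproduces the right-hand side of \eqref{eq:Ax:rec}.

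The only real obstacle is a minor formal one: the sum defining $A_d$ is an infinite series, so the splitting at the intermediate stage must be justified as an identity of formal power series. This is routine once one observes that $A_d(x_1, \ldots, x_d)$ lies in $\mathbb{Z}[[x_1, \ldots, x_d]]$ (indeed, each monomial $x_1^{k_1} \cdots x_d^{k_d}$ occurs at most once, so the series converges coefficient-wise in the $(x_1, \ldots, x_d)$-adic topology), and that the substitution $x_{d-1} \mapsto x_{d-1} x_d$ preserves this formal convergence since it raises the total degree. Granting this, the manipulation above is entirely a bookkeeping exercise and delivers \eqref{eq:Ax:rec}.
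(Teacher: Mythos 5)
Your proposal is correct and follows essentially the same route as the paper's proof: evaluate the innermost geometric sum $\sum_{k_d=0}^{k_{d-1}+1} x_d^{k_d} = (1-x_d^{k_{d-1}+2})/(1-x_d)$ and recognize the two resulting multisums as $A_{d-1}(x_1,\ldots,x_{d-1})$ and $x_d^2 A_{d-1}(x_1,\ldots,x_{d-2},x_{d-1}x_d)$. The additional remarks on formal convergence are fine but not needed beyond what the paper implicitly assumes.
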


\begin{proof}
  This follows directly from the definition \eqref{eq:Ax:def} of $A_d (x_1,
  \ldots, x_d)$ by evaluating the innermost geometric series:
  \begin{eqnarray*}
    A_d (x_1, \ldots, x_d) & = & \sum_{k_1 = 0}^{\infty} \sum_{k_2 = 0}^{k_1 +
    1} \cdots \sum_{k_{d - 1} = 0}^{k_{d - 2} + 1} x_1^{k_1} \cdots x_{d -
    1}^{k_{d - 1}} \sum_{k_d = 0}^{k_{d - 1} + 1} x_d^{k_d}\\
    & = & \frac{1}{1 - x_d} \sum_{k_1 = 0}^{\infty} \sum_{k_2 = 0}^{k_1 + 1}
    \cdots \sum_{k_{d - 1} = 0}^{k_{d - 2} + 1} x_1^{k_1} \cdots x_{d -
    1}^{k_{d - 1}} (1 - x_d^{k_{d - 1} + 2})\\
    & = & \frac{A_{d - 1} (x_1, \ldots, x_{d - 1}) - x_d^2 A_{d - 1} (x_1,
    \ldots, x_{d - 2}, x_{d - 1} x_d)}{1 - x_d} .
  \end{eqnarray*}
  
\end{proof}

It is clear from this recursive description that $A_d (x_1, \ldots, x_d)$ is a
rational function in the variables $x_1, \ldots, x_d$. Moreover, we can
describe the denominators as follows.

\begin{lemma}
  $\label{lem:Ax:denom}$For all positive integers $d$,
  \begin{equation}
    A_d (x_1, \ldots, x_d) = \frac{\alpha_d (x_1, \ldots, x_d)}{\prod_{r =
    1}^d (1 - x_1 x_2 \cdots x_r) }, \label{eq:Ax:denom}
  \end{equation}
  where $\alpha_d (x_1, \ldots, x_d) \in \mathbb{Z} [x_1, \ldots, x_d]$ is a
  polynomial of degree $d - 1$ in each variable. Moreover, $\alpha_d (0,
  \ldots, 0) = 1$ and the coefficient of $(x_1 \cdots x_d)^{d - 1}$ is $(-
  1)^{d - 1}$.
\end{lemma}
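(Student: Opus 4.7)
I will prove Lemma~\ref{lem:Ax:denom} by induction on $d$, using the recursion in Lemma~\ref{lem:Ax:rec} as the engine of the induction. The base case $d=1$ is immediate from $A_1(x_1) = 1/(1-x_1)$, with $\alpha_1 = 1$.

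For the inductive step, substitute the inductive hypothesis into both occurrences of $A_{d-1}$ on the right of \eqref{eq:Ax:rec}. In the second occurrence the variables become $(x_1,\ldots,x_{d-2},x_{d-1}x_d)$, so under the substitution the products $x_1x_2\cdots x_r$ become $x_1\cdots x_r$ for $r<d-1$ and $x_1\cdots x_d$ for $r=d-1$. Clearing to the common denominator $\prod_{r=1}^{d}(1-x_1\cdots x_r)$, the recursion becomes
\begin{equation*}
  A_d(x_1,\ldots,x_d) = \frac{N(x_1,\ldots,x_d)}{(1-x_d)\prod_{r=1}^d (1-x_1\cdots x_r)},
\end{equation*}
where
\begin{equation*}
  N = (1-x_1\cdots x_d)\,\alpha_{d-1}(x_1,\ldots,x_{d-1}) - x_d^2(1-x_1\cdots x_{d-1})\,\alpha_{d-1}(x_1,\ldots,x_{d-2},x_{d-1}x_d).
\end{equation*}
The first thing to check, and the one essential algebraic step, is that $(1-x_d) \mid N$. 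This is verified by specializing $x_d=1$: both products $x_1\cdots x_d$ and $x_1\cdots x_{d-1}$ coincide, both evaluations of $\alpha_{d-1}$ coincide, and $N|_{x_d=1} = (1-x_1\cdots x_{d-1})[\alpha_{d-1}-\alpha_{d-1}] = 0$. Define $\alpha_d(x_1,\ldots,x_d) \assign N/(1-x_d) \in \mathbb{Z}[x_1,\ldots,x_d]$; this yields the desired representation \eqref{eq:Ax:denom}.

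The remaining verifications are routine degree- and coefficient-tracking. For the degree bound in each variable, note that by induction $\alpha_{d-1}$ has degree $d-2$ in each variable; the products $(1-x_1\cdots x_d)$ and $(1-x_1\cdots x_{d-1})$ raise the degree by at most $1$ in each of $x_1,\ldots,x_{d-1}$, while the factor $x_d^2$ and the substitution $x_{d-1}\mapsto x_{d-1}x_d$ make the degree of $N$ in $x_d$ at most $d$. Dividing by $(1-x_d)$ drops the $x_d$-degree by $1$, giving degree exactly $d-1$ in each variable of $\alpha_d$. For $\alpha_d(0,\ldots,0)$, evaluating $N$ at the origin gives $1\cdot\alpha_{d-1}(0,\ldots,0) - 0 = 1$, and the denominator $(1-x_d)$ evaluates to $1$, so $\alpha_d(0,\ldots,0) = 1$.

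The one remaining computation, which I expect to be the most delicate bookkeeping step, is the coefficient of $(x_1\cdots x_d)^{d-1}$ in $\alpha_d$. Since $\alpha_d$ has degree $d-1$ in $x_d$ and $N = (1-x_d)\alpha_d$, we get $[x_d^{d-1}]\alpha_d = -[x_d^d]N$. The first summand of $N$ contributes nothing to $[x_d^d]$ (it has degree $1$ in $x_d$), so $[x_d^d]N$ comes entirely from the second summand; here the only $x_{d-1}^{d-2}$-coefficient of $\alpha_{d-1}$ survives, and a short calculation gives $[x_d^{d-1}]\alpha_d = (1-x_1\cdots x_{d-1})\,x_{d-1}^{d-2}\,\beta(x_1,\ldots,x_{d-2})$, where $\beta = [x_{d-1}^{d-2}]\alpha_{d-1}$. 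Extracting $x_{d-1}^{d-1}$ picks out the term $-x_1\cdots x_{d-2}\,\beta$, and then extracting $(x_1\cdots x_{d-2})^{d-1}$ reduces to $-[(x_1\cdots x_{d-1})^{d-2}]\alpha_{d-1} = -(-1)^{d-2} = (-1)^{d-1}$ by the inductive hypothesis, completing the induction.
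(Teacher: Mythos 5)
Your proposal is correct and follows essentially the same route as the paper: induction on $d$ via the recursion of Lemma~\ref{lem:Ax:rec}, clearing to the common denominator $\prod_{r=1}^d(1-x_1\cdots x_r)$, verifying that the numerator vanishes at $x_d=1$ so that the spurious factor $1-x_d$ cancels, and then tracking degrees and the top coefficient. The only (harmless) deviations are that you obtain $\alpha_d(0,\ldots,0)=1$ from the recursion rather than from the series definition of $A_d$, and your coefficient extraction via $[x_d^{d-1}]\alpha_d=-[x_d^d]N$ is in fact a slightly more careful version of the paper's informal highest-degree-term argument.
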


\begin{proof}
  Since $A_1 (x_1) = 1 / (1 - x_1)$, the claim is true for $d = 1$. For the
  purpose of induction on $d$, we fix $d$ and assume that \eqref{eq:Ax:denom}
  has already been shown to hold for $d - 1$. In particular, we may assume
  that
  \begin{eqnarray*}
    A_{d - 1} (x_1, \ldots, x_{d - 1}) & = & \frac{\alpha_{d - 1} (x_1,
    \ldots, x_{d - 1})}{\prod_{r = 1}^{d - 1} (1 - x_1 x_2 \cdots x_r) },\\
    A_{d - 1} (x_1, \ldots, x_{d - 2}, x_{d - 1} x_d) & = & \frac{\alpha_{d -
    1} (x_1, \ldots, x_{d - 2}, x_{d - 1} x_d)}{(1 - x_1 x_2 \cdots x_{d - 1}
    x_d) \prod_{r = 1}^{d - 2} (1 - x_1 x_2 \cdots x_r) } .
  \end{eqnarray*}
  Note that the least common multiple of the two denominators on the
  right-hand sides is $\prod_{r = 1}^d (1 - x_1 x_2 \cdots x_r)$. By
  Lemma~\ref{lem:Ax:rec}, we therefore have
  \begin{eqnarray}
    A_d (x_1, \ldots, x_d) & = & \frac{A_{d - 1} (x_1, \ldots, x_{d - 1}) -
    x_d^2 A_{d - 1} (x_1, \ldots, x_{d - 2}, x_{d - 1} x_d)}{1 - x_d}
    \nonumber\\
    & = & \frac{\beta_d (x_1, \ldots, x_d)}{(1 - x_d) \prod_{r = 1}^d (1 -
    x_1 x_2 \cdots x_r) },  \label{eq:Ax:denom:2}
  \end{eqnarray}
  for some $\beta_d (x_1, \ldots, x_d) \in \mathbb{Z} [x_1, \ldots, x_d]$. On
  the other hand, observe that
  \begin{equation*}
    A_{d - 1} (x_1, \ldots, x_{d - 1}) - x_d^2 A_{d - 1} (x_1, \ldots, x_{d -
     2}, x_{d - 1} x_d)
  \end{equation*}
  vanishes if we set $x_d = 1$. It follows that $\beta_d (x_1, \ldots, x_d)$
  is a multiple of $1 - x_d$, and this cancels with the factor $1 - x_d$ in
  the denominator of \eqref{eq:Ax:denom:2}, resulting in \eqref{eq:Ax:denom}
  with $\alpha_d (x_1, \ldots, x_d) = \beta_d (x_1, \ldots, x_d) / (1 - x_d)
  \in \mathbb{Z} [x_1, \ldots, x_d]$.
  
  Since the constant term of the denominator of \eqref{eq:Ax:denom} is $1$, it
  follows from the definition \eqref{eq:Ax:def} that the constant term
  $\alpha_d (0, \ldots, 0)$ of the numerator is $1$ as well. Finally, to
  extract the numerator term of highest degree, we note that
  \begin{eqnarray*}
    \beta_d (x_1, \ldots, x_d) & = & (1 - x_1 x_2 \cdots x_d) \alpha_{d - 1}
    (x_1, \ldots, x_{d - 1})\\
    &  & - x_d^2 (1 - x_1 x_2 \cdots x_{d - 1}) \alpha_{d - 1} (x_1, \ldots,
    x_{d - 2}, x_{d - 1} x_d) .
  \end{eqnarray*}
  It then follows from the induction hypothesis that the term of highest
  degree in $\beta_d (x_1, \ldots, x_d)$ is
  \begin{equation*}
    - x_d^2 (- x_1 x_2 \cdots x_{d - 1}) (- 1)^{d - 2} (x_1 x_2 \cdots
     x_d)^{d - 2} = x_d (- 1)^{d - 2} (x_1 x_2 \cdots x_d)^{d - 1} .
  \end{equation*}
  Consequently, after dividing $\beta_d (x_1, \ldots, x_d)$ by the factor $1 -
  x_d$, we find that the term of highest degree in $\alpha_d (x_1, \ldots,
  x_d)$ is $(- 1)^{d - 1} (x_1 x_2 \cdots x_d)^{d - 1}$ as claimed.
\end{proof}

Since $A_d (q) = A_d (q, \ldots, q)$, we observe that Lemma~\ref{lem:Ax:denom}
implies Theorem~\ref{thm:Aq:denom} as a special case as shown below.

\begin{proof}[Proof of Theorem~\ref{thm:Aq:denom}]
  In the special case where we set all variables $x_1, \ldots, x_d$ to $q$,
  the product $\prod_{r = 1}^d (1 - x_1 x_2 \cdots x_r)$ becomes the
  $q$-Pochhammer $(q ; q)_d$. As such, Lemma~\ref{lem:Ax:denom} shows that
  \begin{equation*}
    A_d (q) = \frac{\alpha_d (q)}{(q ; q)_d}, \quad \alpha_d (q) = \alpha_d
     (q, \ldots, q) \in 1 + q\mathbb{Z} [q],
  \end{equation*}
  as claimed in \eqref{eq:Aq:denom}. Moreover, the fact that the term of
  highest degree in $\alpha_d (x_1, \ldots, x_d)$ is \ $(x_1 \cdots x_d)^{d -
  1}$ implies that the degree of $\alpha_d (q)$ is $(d - 1) d$.
\end{proof}

We conclude this section by establishing the following asymptotic result for
$a_d (n)$ which we will use in Section~\ref{sec:Rk:formulas} for determining
the asymptotics of $R_k (n)$ as $n \rightarrow \infty$.

\begin{lemma}
  Fix a positive integer $d$. As $n \rightarrowlim \infty$, we have
  \begin{equation}
    a_d (n) = \frac{1}{d!} \frac{n^{d - 1}}{(d - 1) !} + O (n^{d - 2}) .
    \label{eq:ad:asy:first}
  \end{equation}
\end{lemma}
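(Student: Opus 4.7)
The strategy is to apply singularity analysis to the rational form $A_d(q)=\alpha_d(q)/(q;q)_d$ provided by Theorem~\ref{thm:Aq:denom}. The dominant singularity is the pole at $q=1$: since $1-q^r = r(1-q)+O((1-q)^2)$, we have $(q;q)_d = d!\,(1-q)^d\bigl(1+O(1-q)\bigr)$ near $q=1$. A partial fraction decomposition of $A_d(q)$ therefore yields
\[
a_d(n) \;=\; \frac{\alpha_d(1)}{d!}\binom{n+d-1}{d-1} + O(n^{d-2}) \;=\; \frac{\alpha_d(1)}{d!\,(d-1)!}\,n^{d-1} + O(n^{d-2}),
\]
so it remains to verify that $\alpha_d(1)=1$ and to bound the contributions of the other poles.

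The main step is to prove $\alpha_d(1)=1$. Since $\alpha_d(q)=\alpha_d(q,\ldots,q)$, it is enough to show $\alpha_d(1,\ldots,1)=1$, which I would do by induction on $d$. Using the intermediate identity
\[
\beta_d(x_1,\ldots,x_d) = (1-x_1\cdots x_d)\,\alpha_{d-1}(x_1,\ldots,x_{d-1}) - x_d^2(1-x_1\cdots x_{d-1})\,\alpha_{d-1}(x_1,\ldots,x_{d-2},x_{d-1}x_d)
\]
from the proof of Lemma~\ref{lem:Ax:denom}, together with $\alpha_d = \beta_d/(1-x_d)$, I specialize $x_1=\cdots=x_{d-1}=1$. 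This annihilates the second summand (its prefactor $1-x_1\cdots x_{d-1}$ vanishes), leaving $\beta_d(1,\ldots,1,x_d) = (1-x_d)\,\alpha_{d-1}(1,\ldots,1)$. Dividing by $1-x_d$ gives $\alpha_d(1,\ldots,1)=\alpha_{d-1}(1,\ldots,1)$, and the base case $\alpha_1(x_1)=1$ completes the induction.

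Finally, the remaining poles of $A_d(q)$ occur at primitive $m$-th roots of unity $\zeta\neq 1$ with $2\leq m\leq d$; the multiplicity of $\zeta$ as a root of $(q;q)_d$ equals $\lfloor d/m\rfloor\leq\lfloor d/2\rfloor\leq d-1$ (for $d\geq 2$), so by standard partial fraction coefficient extraction each such pole contributes at most $O(n^{d-2})$ to $a_d(n)$, as do the subleading terms at $q=1$; the case $d=1$ is immediate from $A_1(q)=1/(1-q)$. I do not anticipate any serious obstacle: the only delicate point is checking that the pole at $q=1$ is genuinely of order $d$, i.e.\ that $\alpha_d(1)\neq 0$, and this is exactly what the induction above supplies.
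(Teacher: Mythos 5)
Your proof is correct, but it takes a genuinely different route from the paper. The paper avoids singularity analysis at this stage: it sandwiches $a_d(n)$ combinatorially between the partition counts $p_d(n)$ and $p_d(n+T_d)$ (partitions into at most $d$ parts automatically satisfy the composition constraint, and the bijection with partitions of $n+T_d$ having Durfee triangle of size $d$ gives the upper bound), and then invokes the classical Laguerre--Schur asymptotic $p_d(n) = \tfrac{1}{d!}\tfrac{n^{d-1}}{(d-1)!} + O(n^{d-2})$. In the paper the evaluation $\alpha_d(1)=1$ is then \emph{deduced} from this lemma (Corollary~\ref{cor:alphad:1}) by comparing the two asymptotics, whereas you prove $\alpha_d(1)=1$ first, by a clean induction on the multivariate identity for $\beta_d$ from the proof of Lemma~\ref{lem:Ax:denom} (specializing $x_1=\cdots=x_{d-1}=1$ kills the second summand and yields $\alpha_d(1,\ldots,1)=\alpha_{d-1}(1,\ldots,1)$), and only then extract the asymptotics from the pole at $q=1$ of $A_d(q)=\alpha_d(q)/(q;q)_d$, with the other roots of unity contributing $O(n^{\lfloor d/2\rfloor-1})$. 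Both arguments are sound; yours is self-contained (no appeal to the classical $p_d(n)$ asymptotic) and has the added benefit of giving a direct algebraic proof of Corollary~\ref{cor:alphad:1}, while the paper's sandwich argument is shorter given the classical result and keeps the singularity analysis confined to the proof of Theorem~\ref{thm:Rk:asy}.
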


\begin{proof}
  Let $p_d (n)$ denote the number of partitions of $n$ into at most $d$ parts.
  Since $a_d (n)$ counts weak compositions of $n$ into $d$ parts such that
  each part does not exceed its predecessor by more than $1$, we clearly have
  $p_d (n) \leq a_d (n)$. On the other hand, we know from the proof of
  Lemma~\ref{lem:F:gf} that $a_d (n)$ equals the number of partitions of $n +
  T_d$ into exactly $d$ parts with Durfee triangle of size $d$. This implies
  that $a_d (n) \leq p_d (n + T_d)$. Combined, we therefore have
  \begin{equation}
    p_d (n) \leq a_d (n) \leq p_d (n + T_d) .
    \label{eq:ad:pd:bounds}
  \end{equation}
  On the other hand, it is well-known that
  \begin{equation}
    p_d (n) = \frac{1}{d!} \frac{n^{d - 1}}{(d - 1) !} + O (n^{d - 2}) .
    \label{eq:pd:asy:first}
  \end{equation}
  We refer, for instance, to \cite[Part~1, Problem~27]{polya-szego-1} or
  \cite[Theorem~4.2.1]{ramirez-alfonsin-frobenius} where the result is
  attributed to Laguerre and Schur. The bounds \eqref{eq:ad:pd:bounds}
  combined with \eqref{eq:pd:asy:first} show that $a_d (n) \sim p_d (n)$ as $n
  \rightarrow \infty$. This proves \eqref{eq:ad:asy:first}.
  
  For completeness, we indicate a direct proof of \eqref{eq:pd:asy:first}. To
  that end, observe that the argument used in the proof of
  Theorem~\ref{thm:Rk:asy} also applies to the coefficients $c_d (n)$ of
  \begin{equation*}
    \frac{\gamma_d (q)}{(q ; q)_d} = \sum_{n = 0}^{\infty} c_d (n) q^n,
  \end{equation*}
  where $\gamma_d (q)$ is some polynomial in $q$, and shows that, as $n
  \rightarrow \infty$,
  \begin{equation}
    c_d (n) = \frac{\gamma_d (1)}{d!} \frac{n^{d - 1}}{(d - 1) !} + O (n^{d -
    2}) . \label{eq:cd:asy:first}
  \end{equation}
  Since the numbers $p_d (n)$ have the simple generating function
  \begin{equation}
    \sum_{n \geq 0} p_d (n) q^n = \frac{1}{(q ; q)_d}, \label{eq:pd:gf}
  \end{equation}
  the claimed asymptotics \eqref{eq:pd:asy:first} follows from
  \eqref{eq:cd:asy:first} upon setting $\gamma_d (q) = 1$.
\end{proof}

\begin{corollary}
  \label{cor:alphad:1}Let $\alpha_d (q) = A_d (q) (q ; q)_d$ as in
  \eqref{eq:Aq:denom}. Then, for all positive integers $d$, we have $\alpha_d
  (1) = 1$.
\end{corollary}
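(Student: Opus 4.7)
The plan is to obtain $\alpha_d(1) = 1$ by comparing two different asymptotic expressions for the same quantity $a_d(n)$. On the one hand, the lemma just established gives directly
\begin{equation*}
  a_d(n) = \frac{1}{d!}\frac{n^{d-1}}{(d-1)!} + O(n^{d-2}).
\end{equation*}
On the other hand, Theorem~\ref{thm:Aq:denom} writes $A_d(q) = \alpha_d(q)/(q;q)_d$, which is exactly a generating function of the form appearing in \eqref{eq:cd:asy:first} with $\gamma_d = \alpha_d$. Applying \eqref{eq:cd:asy:first} with this choice then yields
\begin{equation*}
  a_d(n) = \frac{\alpha_d(1)}{d!}\frac{n^{d-1}}{(d-1)!} + O(n^{d-2}).
\end{equation*}

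Since both expansions describe the same sequence, the leading coefficients must agree, forcing $\alpha_d(1) = 1$. No induction or further combinatorial argument is needed; the entire content is a matching of leading asymptotic terms.

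The only subtlety I see is a logical/circularity one: formula \eqref{eq:cd:asy:first} is stated in the ``for completeness'' portion of the preceding proof and it refers forward to the argument used in Theorem~\ref{thm:Rk:asy}. I would therefore, in writing this up, briefly reassure the reader that \eqref{eq:cd:asy:first} is a general consequence of the partial fraction decomposition of $\gamma_d(q)/(q;q)_d$, where the pole at $q=1$ has order $d$ with residue proportional to $\gamma_d(1)$, and the remaining roots of unity contribute only lower-order polynomial growth in $n$; no part of this general fact depends on the corollary itself. With that caveat, the identification $\alpha_d(1) = 1$ is immediate.
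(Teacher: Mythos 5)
Your proof is correct and is essentially identical to the paper's own argument: both set $\gamma_d(q) = \alpha_d(q)$ in \eqref{eq:cd:asy:first} and compare the resulting leading asymptotic term with \eqref{eq:ad:asy:first} to conclude $\alpha_d(1)=1$. Your added remark that \eqref{eq:cd:asy:first} rests only on the partial fraction decomposition of $\gamma_d(q)/(q;q)_d$ and does not depend on the corollary is a reasonable (and correct) clarification, but it does not change the route.
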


\begin{proof}
  Setting $\gamma_d (q) = \alpha_d (q)$ in \eqref{eq:cd:asy:first}, it follows
  that
  \begin{equation*}
    a_d (n) = \frac{\alpha_d (1)}{d!} \frac{n^{d - 1}}{(d - 1) !} + O (n^{d -
     2}) .
  \end{equation*}
  Comparison with \eqref{eq:ad:asy:first} then shows that $\alpha_d (1) = 1$.
\end{proof}

\section{The generating function \texorpdfstring{$\mathcal{F}_k (q)$}{Fk(q)}}\label{sec:Fk}

We now return to the generating function $\mathcal{F}_k (q) \in q^{T_k}
\mathbb{Z} [[q]]$. To simplify exposition, we consider the normalization
\begin{equation*}
  F_k (q) = q^{- T_k} \mathcal{F}_k (q) = \sum_{n = 0}^{\infty} f_k (n) q^n
   \in 1 + q\mathbb{Z} [[q]],
\end{equation*}
so that $R_k (n) = f_k (n - T_k)$. The values of the numbers $f_k (n)$ for
small $k$ and $n$ are recorded in the following table (cf.~{\tmtt{A325188}} in
\cite{sloane-oeis}):
\begin{equation*}
  \begin{array}{|c|c|c|c|c|c|c|c|c|c|c|c|}
     \hline
     k\backslash n & 0 & 1 & 2 & 3 & 4 & 5 & 6 & 7 & 8 & 9 & 10\\
     \hline
     1 & 1 & 2 & 2 & 2 & 2 & 2 & 2 & 2 & 2 & 2 & 2\\
     \hline
     2 & 1 & 3 & 5 & 8 & 9 & 12 & 13 & 16 & 17 & 20 & 21\\
     \hline
     3 & 1 & 4 & 8 & 15 & 23 & 32 & 43 & 54 & 67 & 82 & 97\\
     \hline
     4 & 1 & 5 & 12 & 24 & 42 & 66 & 98 & 135 & 181 & 233 & 298\\
     \hline
     5 & 1 & 6 & 17 & 37 & 70 & 118 & 189 & 282 & 402 & 552 & 736\\
     \hline
     6 & 1 & 7 & 23 & 55 & 112 & 201 & 337 & 533 & 801 & 1158 & 1617\\
     \hline
     7 & 1 & 8 & 30 & 79 & 173 & 331 & 581 & 959 & 1502 & 2262 & 3286\\
     \hline
     8 & 1 & 9 & 38 & 110 & 259 & 528 & 974 & 1676 & 2724 & 4241 & 6368\\
     \hline
   \end{array}
\end{equation*}
In the previous section, we observed that $A_d (q)$ is a rational function in
$q$. In light of Lemma~\ref{lem:F:gf}, it follows that $F_k (q)$, and hence
$\mathcal{F}_k (q)$, are rational functions as well. For instance, from the
expressions for $A_d (q)$ given in the previous section for $d \in \{ 1, 2, 3,
4 \}$ we readily find that $F_1 (q) = (1 + q) / (1 - q)$ as well as
\begin{eqnarray*}
  F_2 (q) & = & \frac{1 + 2 q + q^2 + q^3 - q^4}{(1 - q)^2 (1 + q)},\\
  F_3 (q) & = & \frac{1 + 2 q + q^2 + 2 q^3 - q^4 - q^6 - q^7 + q^8}{(1 - q)^3
  (1 + q + q^2)},\\
  F_4 (q) & = & \frac{1 + 4 q + 6 q^2 + 7 q^3 + 6 q^4 + 2 q^5 - 5 q^7 - 5 q^8
  - 5 q^9 + q^{11} + 3 q^{12} + 2 q^{13} - q^{16}}{(1 - q)^4 (1 + q)^2 (1 + q
  + q^2) (1 + q^2)}
\end{eqnarray*}
which match the formulas derived in \cite{sharan-rook} (also see the entries
{\tmtt{A325168}}, {\tmtt{A382682}}, {\tmtt{A384562}} in \cite{sloane-oeis}).
The denominators of the latter three rational functions are $(q ; q)_2$, $(q ;
q)_3 / (1 + q)$ and $(q ; q)_4$, respectively. If written as rational
functions with denominator $(q ; q)_k$, the numerator degrees of $F_k (q)$ are
$k^2$ for $k \in \{ 1, 2, 3, 4 \}$. The next result proves that this
observation holds in general. In Lemma~\ref{lem:Fk:denom:odd}, we then show
that the denominator can be reduced to $(q ; q)_k / (1 + q)$ for all odd $k
\geq 3$.

\begin{theorem}
  \label{thm:Fk:denom}For all positive integers $k$,
  \begin{equation}
    F_k (q) = \frac{\varphi_k (q)}{(q ; q)_k} \label{eq:Fk:denom}
  \end{equation}
  where $\varphi_k (q) \in 1 + q\mathbb{Z} [q]$ is a polynomial of degree
  $k^2$. Moreover, the coefficient of $q^{k^2}$ in $\varphi_k (q)$ is $(-
  1)^{k - 1}$.
\end{theorem}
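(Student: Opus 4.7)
The plan is to combine Lemma~\ref{lem:F:gf} with Theorem~\ref{thm:Aq:denom} to express $F_k(q)$ over the common denominator $(q;q)_k$, and then read off the three assertions about $\varphi_k(q)$ by degree bookkeeping. Dividing the identity of Lemma~\ref{lem:F:gf} by $q^{T_k}$ gives $F_k(q) = \sum_{d=0}^k q^d A_d(q) A_{k-d}(q)$. I would substitute $A_d(q) = \alpha_d(q)/(q;q)_d$ from Theorem~\ref{thm:Aq:denom} (noting that $A_0(q)=1$ forces $\alpha_0(q)=1$) and clear both denominators using the Gaussian $q$-binomial coefficient
$$\binom{k}{d}_q \assign \frac{(q;q)_k}{(q;q)_d\,(q;q)_{k-d}} \in \mathbb{Z}[q],$$
arriving at the explicit formula
$$\varphi_k(q) = \sum_{d=0}^k q^d\, \alpha_d(q)\, \alpha_{k-d}(q) \binom{k}{d}_q.$$

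From this formula, integrality and the constant-term claim are essentially automatic: every summand with $d \geq 1$ vanishes at $q = 0$ because of the factor $q^d$, while the $d = 0$ summand equals $\alpha_k(q)$, which has constant term $1$ by Theorem~\ref{thm:Aq:denom}; hence $\varphi_k(q) \in 1 + q\mathbb{Z}[q]$. For the top degree, I would use $\deg \alpha_d = d(d-1)$ and $\deg \binom{k}{d}_q = d(k-d)$ to compute that the $d$-th summand has degree $k^2 - k + d(d+1-k)$. Viewed as a function of $d$, the quantity $d(d+1-k)$ is a parabola opening upward with vertex at $d=(k-1)/2$, so on $\{0,1,\ldots,k\}$ it attains its maximum at one of the endpoints; comparing the values $0$ at $d=0$ and $k$ at $d=k$ shows the maximum is uniquely achieved at $d=k$. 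Consequently no cancellation between summands can occur at the top degree, and the coefficient of $q^{k^2}$ in $\varphi_k(q)$ equals the leading coefficient of $q^k\alpha_k(q)$, namely $(-1)^{k-1}$ by Theorem~\ref{thm:Aq:denom}.

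The only subtle point is this degree optimization: one must confirm that $d=k$ is the \emph{unique} maximizer of $d(d+1-k)$ on $\{0,\ldots,k\}$, so that no cancellation among the $k+1$ summands can spoil the leading coefficient. Everything else reduces to routine polynomial manipulation given the two cited results. The separate odd-$k$ statement $\varphi_k(-1)=0$ lies outside this theorem and is handled in Lemma~\ref{lem:Fk:denom:odd}.
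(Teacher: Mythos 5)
Your proposal is correct and follows essentially the same route as the paper: rewrite $F_k(q)=\sum_{d=0}^k q^d A_d(q)A_{k-d}(q)$ over the common denominator $(q;q)_k$ via the $q$-binomial coefficients to get $\varphi_k(q)=\sum_{d=0}^k \binom{k}{d}_q q^d \alpha_d(q)\alpha_{k-d}(q)$, then read off the constant term from the $d=0$ summand and the top coefficient from the $d=k$ summand. Your explicit verification that $d\mapsto d(d+1-k)$ is uniquely maximized at $d=k$ on $\{0,\dots,k\}$ correctly fills in the degree comparison that the paper dismisses as ``standard calculations.''
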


\begin{proof}
  By Lemma~\ref{lem:F:gf} combined with Theorem~\ref{thm:Aq:denom}, we have
  \begin{equation}
    F_k (q) = \sum_{d = 0}^k q^d A_d (q) A_{k - d} (q) = \sum_{d = 0}^k q^d 
    \frac{\alpha_d (q)}{(q ; q)_d}  \frac{\alpha_{k - d} (q)}{(q ; q)_{k -
    d}}, \label{eq:F:gf:noT}
  \end{equation}
  for polynomials $\alpha_d (q) \in 1 + q\mathbb{Z} [q]$ of degree $(d - 1)
  d$. This true also in the case $d = 0$, for which we have $A_0 (q) = 1$ and,
  hence, $\alpha_0 (q) = 1$. Recall the well-known fact that the quotients
  \begin{equation}
    \binom{k}{d}_q \assign \frac{(q ; q)_k}{(q ; q)_d (q ; q)_{k - d}}
    \label{eq:qbinomial}
  \end{equation}
  are polynomials in $\mathbb{Z} [q]$, known as the Gaussian polynomials or
  $q$-binomial coefficients. We conclude from \eqref{eq:F:gf:noT} that $F_k
  (q) = \varphi_k (q) / (q ; q)_k$ where
  \begin{equation}
    \varphi_k (q) = \sum_{d = 0}^k \binom{k}{d}_q q^d \alpha_d (q) \alpha_{k -
    d} (q) \in 1 + q\mathbb{Z} [q] . \label{eq:phik:alpha}
  \end{equation}
  It follows readily from \eqref{eq:qpochhammer} that $(q ; q)_d$ has degree
  $d (d + 1) / 2$. Accordingly, the $q$-binomial coefficients in
  \eqref{eq:qbinomial} have degree $d (k - d)$. Using further that $\alpha_d
  (q)$ has degree $(d - 1) d$, standard calculations show that, in the sum for
  $\varphi_k (q)$, the summand corresponding to $d = k$ is the unique one
  contributing the term of highest degree in $q$. That summand simplifies to
  $q^k \alpha_k (q)$ since the involved $q$-binomial coefficient as well as
  $\alpha_0 (q)$ equal $1$. Therefore, the degree of $\varphi_k (q)$ equals
  the degree of $q^k \alpha_k (q)$ which is $k + (k - 1) k = k^2$. Moreover,
  by Theorem~\ref{thm:Aq:denom}, the coefficient of $q^{k^2}$ is $(- 1)^{k -
  1}$.
\end{proof}

While the denominators for $A_d (q)$ provided by Theorem~\ref{thm:Aq:denom}
appear best possible, numerical evidence suggests that, as we observed in the
case $F_3 (q)$ above, the denominators for $F_k (q)$ provided by
Theorem~\ref{thm:Fk:denom} can be improved by removing the factor $1 + q$
whenever $k \geq 3$ is odd.

\begin{lemma}
  \label{lem:Fk:denom:odd}Write $F_k (q) = \varphi_k (q) / (q ; q)_k$ as in
  \eqref{eq:Fk:denom}. If $k \geq 1$ is odd then
  \begin{equation}
    \varphi_k (q) \in (1 + q) \mathbb{Z} [q] . \label{eq:phik:1q}
  \end{equation}
\end{lemma}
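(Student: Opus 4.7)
The plan is to show directly that $\varphi_k(-1) = 0$ whenever $k$ is odd, which immediately yields $(1+q) \mid \varphi_k(q)$. The starting point is the explicit formula
\begin{equation*}
  \varphi_k(q) = \sum_{d=0}^k \binom{k}{d}_q q^d \alpha_d(q) \alpha_{k-d}(q)
\end{equation*}
established in \eqref{eq:phik:alpha}, together with the fact that each $q$-binomial coefficient is a polynomial in $q$, so evaluation at $q=-1$ makes sense.

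The key step will be to exploit the symmetry $\binom{k}{d}_q = \binom{k}{k-d}_q$ of the Gaussian polynomials. Substituting $d \mapsto k-d$ in the sum above (and relabeling) gives the equivalent expression
\begin{equation*}
  \varphi_k(q) = \sum_{d=0}^k \binom{k}{d}_q q^{k-d} \alpha_d(q) \alpha_{k-d}(q).
\end{equation*}
Averaging the two expressions for $\varphi_k(q)$ yields
\begin{equation*}
  2\varphi_k(q) = \sum_{d=0}^k \binom{k}{d}_q \bigl(q^d + q^{k-d}\bigr) \alpha_d(q) \alpha_{k-d}(q).
\end{equation*}

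Now I would set $q=-1$. Since $k$ is odd, $d$ and $k-d$ have opposite parities for every $d \in \{0, 1, \ldots, k\}$, so $(-1)^d + (-1)^{k-d} = 0$ termwise. Consequently every summand in the symmetrized expression vanishes at $q=-1$, so $2\varphi_k(-1) = 0$ and hence $\varphi_k(-1) = 0$. This is equivalent to $(1+q) \mid \varphi_k(q)$ in $\mathbb{Z}[q]$, establishing \eqref{eq:phik:1q}.

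There is no real obstacle here: the argument is a one-line symmetrization followed by a parity observation, and relies only on the already-proved factorization \eqref{eq:phik:alpha} together with the standard symmetry of $q$-binomial coefficients. The only thing worth double-checking is that no additional hypothesis on the polynomials $\alpha_d(q)$ is needed at $q=-1$; and indeed the cancellation happens at the level of the factor $q^d + q^{k-d}$ alone, so the values $\alpha_d(-1)$ play no role.
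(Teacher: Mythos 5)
Your proof is correct and follows essentially the same route as the paper: both substitute $d \mapsto k-d$, invoke the symmetry $\binom{k}{k-d}_q = \binom{k}{d}_q$, and evaluate at $q=-1$ to force $\varphi_k(-1)=0$. The only cosmetic difference is that you average the two expressions to get termwise cancellation of $(-1)^d + (-1)^{k-d}$, while the paper factors out $q^k$ and deduces $\varphi_k(-1) = (-1)^k\varphi_k(-1)$; these are the same computation.
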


\begin{proof}
  From \eqref{eq:phik:alpha}, we have
  \begin{equation}
    \varphi_k (q) = \sum_{d = 0}^k \binom{k}{d}_q q^d \alpha_d (q) \alpha_{k -
    d} (q) = q^k \sum_{d = 0}^k \binom{k}{d}_q q^{- d} \alpha_d (q) \alpha_{k
    - d} (q) \label{eq:phik:alpha:2}
  \end{equation}
  where, for the second equality, we replaced $d$ by $k - d$ in the summand
  and used the symmetry
  \begin{equation*}
    \binom{k}{k - d}_q = \binom{k}{d}_q
  \end{equation*}
  of the $q$-binomial coefficients. Upon setting $q = - 1$ in both of the sums
  in \eqref{eq:phik:alpha:2} and comparing the results, we find that
  \begin{equation*}
    \varphi_k (- 1) = (- 1)^k \varphi_k (- 1) .
  \end{equation*}
  If $k$ is odd then this implies that $\varphi_k (- 1) = 0$ which is
  equivalent to \eqref{eq:phik:1q}.
\end{proof}

Apart from this observation, numerical evidence suggests that
Theorem~\ref{thm:Fk:denom} is best possible. In other words, it appears that
the minimal denominator of $F_k (q)$ equals $(q ; q)_k$, if $k$ is even, and
equals $(q ; q)_k / (1 + q)$ if $k \geq 3$ is odd (since, for $k
\geq 2$, the factor $1 + q$ divides both $(q ; q)_k$ and $\varphi_k
(q)$). By explicitly calculating $F_k (q)$, we have confirmed this conjecture
for $k \leq 20$.

\begin{example}
  The first case not covered by the approach in \cite{sharan-rook} is $k =
  6$. In that case, in accordance with Theorem~\ref{thm:Fk:denom}, we find
  $F_6 (q) = \varphi_6 (q) / (q ; q)_6$ where $\varphi_6 (q)$ is the degree
  $6^2$ polynomial
  \begin{eqnarray*}
    \varphi_6 (q) & = & 1 + 6 q + 15 q^2 + 25 q^3 + 34 q^4 + 35 q^5 + 31 q^6 +
    20 q^7 - 19 q^9 - 39 q^{10}\\
    &  & - 48 q^{11} - 50 q^{12} - 36 q^{13} - 19 q^{14} + 13 q^{15} + 30
    q^{16} + 45 q^{17} + 42 q^{18}\\
    &  & + 28 q^{19} + 11 q^{20} - 8 q^{21} - 21 q^{22} - 24 q^{23} - 15
    q^{24} - 9 q^{25} + 2 q^{26} + 3 q^{27}\\
    &  & + 5 q^{28} + 3 q^{29} + 2 q^{30} + 2 q^{31} - q^{36} .
  \end{eqnarray*}
\end{example}

\section{Asymptotics and exact formulas for \texorpdfstring{$R_k (n)$}{Rk(n)}}\label{sec:Rk:formulas}

Recall that a sequence $a (n)$ is \emph{constant recursive}, or $C$-finite,
of order $r$ if there exist complex numbers $c_0, c_1, \ldots, c_{r - 1}$ such
that
\begin{equation}
  a (n + r) = c_{r - 1} a (n + r - 1) + \cdots + c_1 a (n + 1) + c_0 a (n)
  \label{eq:cfinite:rec}
\end{equation}
for all $n \geq n_0$. The corresponding polynomial
\begin{equation}
  P (x) = x^r - c_{r - 1} x^{r - 1} - \cdots - c_1 x - c_0 \label{eq:charpoly}
\end{equation}
is called the characteristic polynomial. We refer to \cite{epsw-rec} or
\cite[Chapter~4]{kauers-paule-ct} for introductions to constant recursive
sequences. It is well-known that constant recursive sequences are exactly
those whose generating function is rational. More precisely, the condition
that $a (n)$ satisfies a recurrence of the form \eqref{eq:cfinite:rec} for all
$n \geq n_0$ is equivalent to
\begin{equation}
  \sum_{n = 0}^{\infty} a (n) q^n = \frac{N (q)}{D (q)} \label{eq:cfinite:rat}
\end{equation}
for polynomials $N, D \in \mathbb{C} [q]$, and $\deg (N) < \deg (D) + n_0$.
Moreover, if $P (x)$ is the characteristic polynomial \eqref{eq:charpoly} of
the recurrence then we can choose
\begin{equation*}
  D (q) = q^r P (1 / q) = 1 - c_{r - 1} q - \cdots - c_1 q^{r - 1} - c_0 q^r
   .
\end{equation*}
It therefore follows from Theorem~\ref{thm:Fk:denom} that, for every fixed
positive integer $k$, the sequence $R_k (n)$ is constant recursive. More
precisely, we find the following:

\begin{lemma}
  \label{lem:Rk:cfinite}Fix a positive integer $k$. The sequence $R_k (n)$ is
  constant recursive and, for all $n > k^2$, satisfies a recurrence
  \eqref{eq:cfinite:rec} of order
  \begin{equation*}
    r = \left\{\begin{array}{ll}
       T_k - 1, & \text{if $k \geq 3$ is odd},\\
       T_k, & \text{otherwise} .
     \end{array}\right.
  \end{equation*}
\end{lemma}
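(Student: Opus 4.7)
The plan is to apply the standard dictionary between rational generating functions and constant recursive sequences (equations \eqref{eq:cfinite:rec}--\eqref{eq:cfinite:rat}) to the generating function for $R_k(n)$ supplied by Theorem~\ref{thm:Fk:denom} and Lemma~\ref{lem:Fk:denom:odd}. Since $R_k(n) = f_k(n - T_k)$, we have
\begin{equation*}
  \sum_{n \geq 0} R_k(n) q^n = \mathcal{F}_k(q) = \frac{q^{T_k} \varphi_k(q)}{(q;q)_k},
\end{equation*}
which is already a ratio $N(q)/D(q)$ of polynomials; the dictionary then yields a recurrence of order $\deg D$, valid for all $n \geq n_0$ whenever $\deg N < \deg D + n_0$.

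In the case $k \in \{1,2\}$ or $k$ even with $k \geq 2$, I would take $D(q) = (q;q)_k$ and $N(q) = q^{T_k} \varphi_k(q)$. By \eqref{eq:qpochhammer} the denominator has degree $T_k$, and by Theorem~\ref{thm:Fk:denom} the numerator has degree $T_k + k^2$. The dictionary thus produces a recurrence of order $T_k$ valid on the range $n \geq n_0$ with $T_k + k^2 < T_k + n_0$, i.e., for all $n > k^2$, as claimed.

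For $k \geq 3$ odd, I would invoke Lemma~\ref{lem:Fk:denom:odd} to factor $\varphi_k(q) = (1+q)\psi_k(q)$ with $\psi_k \in \mathbb{Z}[q]$ of degree $k^2 - 1$, and cancel this $1+q$ against the factor $1-q^2 = (1-q)(1+q)$ present in $(q;q)_k$ (which exists because $k \geq 2$). This gives
\begin{equation*}
  \mathcal{F}_k(q) = \frac{q^{T_k} \psi_k(q)}{(q;q)_k/(1+q)},
\end{equation*}
with denominator of degree $T_k - 1$ and numerator of degree $T_k + k^2 - 1$. The dictionary again forces $n_0 > k^2$ and produces a recurrence of order $T_k - 1$ valid for all $n > k^2$.

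I do not expect any substantial obstacle here: the entire argument is bookkeeping on degrees, and all the real content has already been assembled in Theorem~\ref{thm:Fk:denom} and Lemma~\ref{lem:Fk:denom:odd}. The one minor sanity check is that the denominator used in each case has nonzero constant term, so that its reciprocal polynomial $q^r P(1/q)$ is of the required form in \eqref{eq:cfinite:rat} and the leading recurrence coefficient $c_0$ is nonzero; this is automatic since $(q;q)_k$ and $(q;q)_k/(1+q)$ both evaluate to $1$ at $q = 0$.
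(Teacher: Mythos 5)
Your proposal is correct and follows essentially the same route as the paper: apply the rational-function/recurrence dictionary \eqref{eq:cfinite:rat} to $\mathcal{F}_k(q) = q^{T_k}\varphi_k(q)/(q;q)_k$ from Theorem~\ref{thm:Fk:denom}, and for odd $k\geq 3$ cancel the factor $1+q$ supplied by Lemma~\ref{lem:Fk:denom:odd} to drop the order from $T_k$ to $T_k-1$. Your degree bookkeeping showing $n_0 > k^2$ in both cases matches the paper's.
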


\begin{proof}
  By Theorem~\ref{thm:Fk:denom},
  \begin{equation}
    \mathcal{F}_k (q) = \sum_{n = 0}^{\infty} R_k (n) q^n = q^{T_k}
    \frac{\varphi_k (q)}{(q ; q)_k} \label{eq:Rk:gf:rat}
  \end{equation}
  where $\varphi_k (q) \in 1 + q\mathbb{Z} [q]$ has degree $k^2$. As in
  \eqref{eq:cfinite:rat}, this implies that $R_k (n)$ is constant recursive.
  Moreover, since $(q ; q)_k$ has degree $1 + 2 + \cdots + k = T_k$, we find
  that we can choose $n_0 = k^2 + 1$ in the above discussion. The recurrence
  corresponding to the denominators $(q ; q)_k$ has order $T_k$. On the other
  hand, we have shown in Lemma~\ref{lem:Fk:denom:odd} that, for odd $k
  \geq 3$, the denominators can be reduced to $(q ; q)_k / (1 + q)$.
  Accordingly, for odd $k \geq 3$, the order of the recurrence can be
  reduced to $T_k - 1$.
\end{proof}

As in the discussion after Lemma~\ref{lem:Fk:denom:odd}, it appears that the
orders of the recurrences in Lemma~\ref{lem:Rk:cfinite} are minimal.

\begin{example}
  \label{eg:F3:rat}We have
  \begin{equation*}
    \mathcal{F}_3 (q) = \sum_{n = 1}^{\infty} R_3 (n) q^n = q^{T_3} F_3 (q) =
     \frac{q^6 (1 + 2 q + q^2 + 2 q^3 - q^4 - q^6 - q^7 + q^8)}{(1 - q)^3 (1 +
     q + q^2)} .
  \end{equation*}
  Let $P (q) = (1 - q)^3 (1 + q + q^2) = 1 - 2 q + q^2 - q^3 + 2 q^4 - q^5$.
  It therefore follows as observed for \eqref{eq:cfinite:rat} that the
  sequence $R_3 (n)$ satisfies the recurrence
  \begin{equation*}
    R_3 (n + 5) = 2 R_3 (n + 4) - R_3 (n + 3) + R_3 (n + 2) - 2 R_3 (n + 1) +
     R_3 (n)
  \end{equation*}
  of order $5 = T_3 - 1$ for all $n > 14 - 5 = 3^2$. This coincides with the
  conclusion in Corollary~3.3 of \cite{sharan-rook}.
\end{example}

\begin{example}
  It likewise follows from Theorem~\ref{thm:Aq:denom} that the coefficients
  $a_d (n)$ of $A_d (q)$ discussed in Section~\ref{sec:Ad} are constant
  recursive and, for all $n > (d - 1) d - T_d$ or, equivalently, $n \geq
  T_{d - 2}$, satisfy a recurrence \eqref{eq:cfinite:rec} of order $T_d$. This
  proves, in particular, the generating functions and recurrences that are
  presently listed as conjectured by Chai Wah Wu for the cases $k \in \{ 4, 5,
  6, 7 \}$ in the OEIS \cite{sloane-oeis} (sequences \texttt{A244240},
  {\tmtt{A244241}}, {\tmtt{A244242}}, {\tmtt{A244243}}).
\end{example}

Let $\lambda_1, \lambda_2, \ldots, \lambda_s \in \mathbb{C}$ be the roots,
with multiplicities $m_1, m_2, \ldots, m_s$, of the characteristic polynomial
\eqref{eq:charpoly}. Then the sequence $a (n)$ admits an exact formula of the
form
\begin{equation}
  a (n) = \sum_{j = 1}^s \sum_{r = 0}^{m_j - 1} c_{j, r} n^r \lambda_j^n,
  \label{eq:cfinite:exact}
\end{equation}
which is valid for all $n \geq n_0$ (where, as before, $n_0$ is such that
the recurrence \eqref{eq:cfinite:rec} holds for all $n \geq n_0$), and
where $c_{j, r} \in \mathbb{C}$ are constants. This can be seen, for instance,
from the generalized binomial expansion applied to the partial fraction
decomposition of \eqref{eq:cfinite:rat}.

In our case of $\mathcal{F}_k (q) = \sum_{n \geq 1} R_k (n) q^n$, it
follows from Theorem~\ref{thm:Fk:denom} that the characteristic roots are all
roots of unity. Therefore, as $n \rightarrow \infty$, the asymptotically
dominating contribution comes from the root with the highest multiplicity.
Pursuing this line of reasoning ultimately results in the asymptotics claimed
in Theorem~\ref{thm:Rk:asy:intro} which is restated here. This in particular
proves, as conjectured in \cite{sharan-rook}, that $R_k (n) \sim C_k n^{k -
1}$ as $n \rightarrow \infty$ for suitable constants $C_k$.

\begin{theorem}
  \label{thm:Rk:asy}Fix a positive integer $k$. As $n \rightarrowlim \infty$,
  we have
  \begin{equation}
    R_k (n) = \frac{2^k}{k!} \frac{n^{k - 1}}{(k - 1) !} + O (n^{k - 2}) .
    \label{eq:Rk:asy:first}
  \end{equation}
\end{theorem}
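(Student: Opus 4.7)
The plan is to extract the leading asymptotic directly from the rational generating function \eqref{eq:Rk:gf:rat}. Since all roots of $(q;q)_k$ are roots of unity, the asymptotic behaviour of $R_k(n) = f_k(n - T_k)$ is dictated by the pole of $F_k(q) = \varphi_k(q)/(q;q)_k$ of highest order. Each factor $(1-q^r)$ in $(q;q)_k$ contributes a simple zero at $q=1$, so $q=1$ is a pole of order exactly $k$, while every other root of unity $\zeta \neq 1$ appears as a zero of $(q;q)_k$ with multiplicity $\lfloor k/\operatorname{ord}(\zeta) \rfloor < k$. Thus in the partial fraction decomposition of $F_k(q)$, only the pole at $q=1$ can contribute a term of the form $c/(1-q)^k$, and all other summands contribute at most $n^{k-2}$ to $[q^n] F_k(q)$.

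To identify the leading coefficient $c$, I would Taylor-expand the denominator at $q=1$. Writing $1 - q^r = r(1-q) + O((1-q)^2)$, one obtains $(q;q)_k = k!\,(1-q)^k \bigl(1 + O(1-q)\bigr)$, so near $q = 1$
\begin{equation*}
  F_k(q) = \frac{\varphi_k(1)}{k!\,(1-q)^k} + \text{lower-order poles}.
\end{equation*}
Extracting coefficients via $[q^n](1-q)^{-k} = \binom{n+k-1}{k-1} \sim n^{k-1}/(k-1)!$ yields $f_k(n) = \frac{\varphi_k(1)}{k!\,(k-1)!}\, n^{k-1} + O(n^{k-2})$, and the shift $R_k(n) = f_k(n - T_k)$ absorbs only into the $O(n^{k-2})$ error term.

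It remains to compute $\varphi_k(1)$, which is the crux of the argument. Using the formula \eqref{eq:phik:alpha} for $\varphi_k(q)$ and specializing $q = 1$, the Gaussian polynomial $\binom{k}{d}_q$ becomes the ordinary binomial coefficient $\binom{k}{d}$, and Corollary~\ref{cor:alphad:1} gives $\alpha_d(1) = 1$ for every $d \geq 0$. Therefore
\begin{equation*}
  \varphi_k(1) = \sum_{d=0}^{k} \binom{k}{d} \cdot 1 \cdot 1 \cdot 1 = 2^k,
\end{equation*}
which plugged into the previous display produces the claimed asymptotic $R_k(n) = \frac{2^k}{k!}\frac{n^{k-1}}{(k-1)!} + O(n^{k-2})$. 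The main technical point is the pole-order comparison that isolates $q=1$ as the unique dominant singularity; once that is in place, the remaining steps are elementary, and the evaluation $\varphi_k(1) = 2^k$ drops out cleanly from Corollary~\ref{cor:alphad:1}.
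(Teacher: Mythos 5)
Your proposal is correct and follows essentially the same route as the paper: isolate the pole at $q=1$ of order $k$ (all other roots of unity having multiplicity at most $\lfloor k/2\rfloor$), extract the leading partial-fraction coefficient $\varphi_k(1)/k!$ (the paper computes $\tilde{D}(1)=\prod_{r=1}^k(1+q+\cdots+q^{r-1})|_{q=1}=k!$ where you Taylor-expand $(q;q)_k$ at $q=1$, which is the same calculation), and evaluate $\varphi_k(1)=2^k$ via \eqref{eq:phik:alpha} and Corollary~\ref{cor:alphad:1}. No gaps.
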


\begin{proof}
  By Theorem~\ref{thm:Fk:denom}, we have
  \begin{equation*}
    \mathcal{F}_k (q) = \sum_{n = 1}^{\infty} R_k (n) q^n = q^{T_k} 
     \frac{\varphi_k (q)}{(q ; q)_k}
  \end{equation*}
  with $\varphi_k (q) \in 1 + q\mathbb{Z} [q]$. We can see from
  \eqref{eq:qpochhammer} that the roots of $(q ; q)_k$ are the roots of unity
  $\zeta$ of order $m$ with $m \leq k$ and their multiplicity is $\lfloor
  k / m \rfloor$. In other words,
  \begin{equation*}
    (q ; q)_k = (- 1)^k \prod_{m = 1}^k \Phi_m^{\lfloor k / m \rfloor} (q)
  \end{equation*}
  where $\Phi_m$ is the $m^{\text{th}}$ cyclotomic polynomial (the monic
  polynomial whose roots are precisely the $m^{\text{th}}$ primitive roots of
  unity). We write $\lambda_1 = 1$ and let $\lambda_2, \lambda_3, \ldots,
  \lambda_s$ be the other roots of unity of order up to $k$. We denote with
  $m_1, m_2, \ldots$ the corresponding multiplicities. Note that $m_1 = k$ and
  that $m_j \leq k / 2$ for $j > 1$. The exact formula
  \eqref{eq:cfinite:exact} therefore implies
  \begin{equation}
    R_k (n) = \sum_{r = 0}^{k - 1} c_{1, r} n^r + \sum_{j = 2}^s \sum_{r =
    0}^{m_j - 1} c_{j, r} n^r \lambda_j^n = \sum_{r = 0}^{k - 1} c_{1, r} n^r
    + O (n^{\lfloor k / 2 \rfloor - 1}) . \label{eq:Rk:asy:1}
  \end{equation}
  It remains to determine the leading coefficient $c_{1, k - 1}$. To this end,
  we write
  \begin{equation*}
    \mathcal{F}_k (q) = q^{T_k}  \frac{\varphi_k (q)}{(q ; q)_k} =
     \frac{\tilde{N} (q)}{(1 - q)^k \tilde{D} (q)}
  \end{equation*}
  for polynomials $\tilde{N} (q), \tilde{D} (q) \in \mathbb{Z} [q]$ with
  $\tilde{N} (1) \neq 0$ and $\tilde{D} (1) \neq 0$. Note that $\tilde{N} (q)
  = q^{T_k} \varphi_k (q)$ and $\tilde{D} (q) = (q ; q)_k / (1 - q)^k$.
  Performing part of the partial fraction expansion, we then have
  \begin{equation*}
    \mathcal{F}_k (q) = \frac{\gamma_k}{(1 - q)^k} + \frac{\gamma_{k - 1}}{(1
     - q)^{k - 1}} + \cdots + \frac{\gamma_1}{1 - q} + \frac{M (q)}{\tilde{D}
     (q)}
  \end{equation*}
  where $\gamma_j$ are constants and $M (q) \in \mathbb{Z} [q]$. Using
  \begin{equation*}
    \frac{(j - 1) !}{(1 - q)^j} = \sum_{n \geq 0} (n + j - 1) (n + j -
     2) \cdots (n + 1) q^n,
  \end{equation*}
  the coefficients $c_{1, r}$ in \eqref{eq:Rk:asy:1} can be obtained from the
  coefficients $\gamma_j$. In particular, the leading coefficient can be
  computed as
  \begin{equation*}
    c_{1, k - 1} = \frac{\gamma_k}{(k - 1) !} = \frac{1}{(k - 1) !} 
     \frac{\tilde{N} (1)}{\tilde{D} (1)} .
  \end{equation*}
  Since $\tilde{N} (q) = q^{T_k} \varphi_k (q)$, we have $\tilde{N} (1) =
  \varphi_k (1)$. On the other hand, we find that $\tilde{D} (1) = k!$ because
  \begin{equation*}
    \tilde{D} (q) = \frac{(q ; q)_k}{(1 - q)^k} = \prod_{r = 1}^k \frac{1 -
     q^r}{1 - q} = \prod_{r = 1}^k (1 + q + q^2 + \cdots + q^{r - 1}) .
  \end{equation*}
  Combined, we obtain
  \begin{equation*}
    c_{1, k - 1} = \frac{1}{(k - 1) !}  \frac{\tilde{N} (1)}{\tilde{D} (1)} =
     \frac{\varphi_k (1)}{(k - 1) !k!} .
  \end{equation*}
  By \eqref{eq:Rk:asy:1}, this proves the claimed asymptotics
  \eqref{eq:Rk:asy:first} if we can show that $\varphi_k (1) = 2^k$. Indeed,
  we can conclude from $\alpha_d (1) = 1$, established in
  Corollary~\ref{cor:alphad:1}, combined with \eqref{eq:phik:alpha} that
  \begin{equation*}
    \varphi_k (1) = \sum_{d = 0}^k \binom{k}{d} \alpha_d (1) \alpha_{k - d}
     (1) = \sum_{d = 0}^k \binom{k}{d} = 2^k,
  \end{equation*}
  as desired.
\end{proof}

\begin{remark}
  Note that we obtain the same leading asymptotics for the coefficients $f_k
  (n)$ of $F_k (q) = q^{- T_k} \mathcal{F}_k (q)$. Namely,
  \begin{equation*}
    f_k (n) = \frac{2^k}{k!} \frac{n^{k - 1}}{(k - 1) !} + O (n^{k - 2}) .
  \end{equation*}
\end{remark}

Recall that a sequence $f (n)$ is a quasi-polynomial of degree $m$ and
quasi-period $\delta$ if it can be expressed as
\begin{equation*}
  f (n) = \varphi_m (n) n^m + \varphi_{m - 1} (n) n^{m - 1} + \cdots +
   \varphi_0 (n)
\end{equation*}
where each $\varphi_j (n)$ has period $\delta$ (that is, $\varphi_j (n +
\delta) = \varphi_j (n)$). See, for instance, Section~4.4 in
\cite{stanley-ec1}. Equivalently, $f (n)$ is a quasi-polynomial of degree
$m$ and quasi-period $\delta$ if there exist polynomials $f_0 (n), f_1 (n),
\ldots, f_{\delta - 1} (n)$ such that $f (n) \equiv f_j (n)$ if $n \equiv j
\pmod{\delta}$.

Suppose that each root $\lambda_j$ of the characteristic polynomial
\eqref{eq:charpoly} is a root of unity of order $\delta_j$ (so that
$\lambda_j^{\delta_j} = 1$). Let $\delta$ be the least common multiple of
$\delta_1, \delta_2, \ldots, \delta_s$, and let $m$ be the maximum of $m_1,
m_2, \ldots, m_s$. Then $a (n)$ is a quasi-polynomial of degree $m - 1$ and
quasi-period $\delta$. Indeed, it follows from \eqref{eq:cfinite:exact} that,
for each $\nu \in \{ 0, 1, \ldots, \delta - 1 \}$, we have
\begin{equation}
  a (\delta n + \nu) = \sum_{j = 1}^s \sum_{r = 0}^{m_j - 1} c_{j, r} (\delta
  n + \nu)^r \lambda_j^{\nu}, \label{eq:cfinite:exact:qp}
\end{equation}
providing an explicit formula for $a (\delta n + \nu)$, for all $n \geq
n_0$, as a polynomial in $n$.

\begin{example}
  \label{eg:R3:qp}As detailed in \cite{sharan-rook}, James Sellers pointed
  out that it follows from the rational generating function for $R_3 (n)$
  given in Example~\ref{eg:F3:rat} that
  \begin{eqnarray*}
    R_3 (n) & = & \left\{\begin{array}{ll}
      6 m^2 - 15 m + 7, & \text{if $n = 3 m$,}\\
      6 m^2 - 11 m + 2, & \text{if $n = 3 m + 1$,}\\
      6 m^2 - 7 m - 1, & \text{if $n = 3 m + 2$,}
    \end{array}\right.
  \end{eqnarray*}
  provided that $n > 9$. We note that the right-hand side can be equivalently
  expressed as
  \begin{equation*}
    R_3 (n) = \frac{2}{3} n^2 - 5 n + \frac{59}{9} + \frac{4}{9} \cos \left(\frac{2 n \pi}{3} \right) .
  \end{equation*}
  In particular, the corresponding first-order asymptotics $R_3 (n) \sim
  \frac{2}{3} n^2$ matches the conclusion of Theorem~\ref{thm:Rk:asy} where it
  is shown that $R_k (n) \sim \frac{2^k}{k!} \frac{n^{k - 1}}{(k - 1) !}$.
  Likewise, in the special case $k = 4$, we obtain $R_4 (n) \sim \frac{1}{9}
  n^3$ as shown in \cite{sharan-rook}.
\end{example}

Representations as in the previous example can be worked out for $R_k (n)$ for
all fixed $k$, as a consequence of Theorem~\ref{thm:Fk:denom}.

\begin{corollary}
  \label{cor:Rk:qp}Let $k$ be a positive integer. Then there exists a
  quasi-polynomial $Q_k (n)$ of degree $k - 1$ and quasi-period $\operatorname{lcm}
  (1, 2, \ldots, k)$ such that $R_k (n) = Q_k (n)$ for all $n > k^2$. For $k =
  3$, the quasi-period is further reduced to $3$.
\end{corollary}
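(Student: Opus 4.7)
The plan is to deduce this directly from Theorem~\ref{thm:Fk:denom} together with the general discussion of exact formulas for constant recursive sequences summarized in equations \eqref{eq:cfinite:exact} and \eqref{eq:cfinite:exact:qp}. Starting from the representation $\mathcal{F}_k(q) = q^{T_k}\varphi_k(q)/(q;q)_k$, I would factor the denominator as $(q;q)_k = (-1)^k \prod_{m=1}^{k} \Phi_m(q)^{\lfloor k/m \rfloor}$, as was already done in the proof of Theorem~\ref{thm:Rk:asy}. Since all roots of $(q;q)_k$ are roots of unity of order at most $k$, the characteristic polynomial of the associated recurrence has all roots equal to roots of unity, with orders dividing $\operatorname{lcm}(1,2,\ldots,k)$.

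Next, I would invoke the exact formula \eqref{eq:cfinite:exact:qp} applied to $R_k(n)$. The maximum multiplicity among the roots of $(q;q)_k$ is $\lfloor k/1 \rfloor = k$, attained at $\lambda_1 = 1$; every other primitive $m$-th root of unity contributes multiplicity $\lfloor k/m \rfloor \leq k/2$. Consequently, setting $\delta = \operatorname{lcm}(1, 2, \ldots, k)$, formula \eqref{eq:cfinite:exact:qp} expresses $R_k(\delta n + \nu)$, for each $\nu \in \{0, 1, \ldots, \delta - 1\}$, as a polynomial in $n$ of degree $k - 1$. This yields the asserted quasi-polynomial $Q_k(n)$ of degree $k-1$ and quasi-period $\delta$.

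To pin down the range of validity, I would use the standard correspondence \eqref{eq:cfinite:rat}: since the numerator $q^{T_k}\varphi_k(q)$ has degree $T_k + k^2$ and the denominator $(q;q)_k$ has degree $T_k$, the exact formula \eqref{eq:cfinite:exact} (and hence the quasi-polynomial representation) holds for all $n \geq n_0$ with $n_0 = k^2 + 1$, giving the bound $n > k^2$.

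Finally, for the improvement when $k = 3$, I would apply Lemma~\ref{lem:Fk:denom:odd}, which allows the denominator to be reduced to $(q;q)_3 / (1+q) = (1-q)^3 (1+q+q^2)$. The roots of this reduced denominator are only $1$ and the primitive cube roots of unity, whose orders are $1$ and $3$, so their least common multiple is $3$. There is no real obstacle here; the argument is essentially a routine translation of the rational generating function into the quasi-polynomial language, and the main point requiring care is simply verifying that the degree shift $T_k + k^2 - T_k = k^2$ yields the correct threshold $n > k^2$.
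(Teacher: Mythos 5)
Your proposal is correct and follows essentially the same route as the paper: both deduce the result from Theorem~\ref{thm:Fk:denom} by noting that the roots of $(q;q)_k$ are roots of unity of order at most $k$ with maximal multiplicity $k$ at $q=1$, apply the exact formula \eqref{eq:cfinite:exact:qp}, and use Lemma~\ref{lem:Fk:denom:odd} to remove the root $-1$ for $k=3$ so that only orders $1$ and $3$ remain. Your explicit degree count justifying the threshold $n > k^2$ matches what the paper delegates to Lemma~\ref{lem:Rk:cfinite}.
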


\begin{proof}
  Recall that the roots of the $q$-Pochhammer $(q ; q)_k$ are all the roots of
  unity of order up to $k$. More precisely, if $\mu_m$ denotes the primitive
  $m^{\text{th}}$ roots of unity, then each $\zeta \in \mu_m$ is a root of $(q
  ; q)_k$ of multiplicity $\lfloor k / m \rfloor$. Accordingly, it follows
  from Lemma~\ref{lem:Rk:cfinite} and the above discussion that $R_k (n)$ has
  a representation~\eqref{eq:cfinite:exact} which here takes the form
  \begin{equation}
    R_k (n) = \sum_{m = 1}^k \sum_{\zeta \in \mu_m} \sum_{r = 0}^{\lfloor k /
    m \rfloor - 1} c_{j, r} n^r \zeta^n \label{eq:Rk:exact}
  \end{equation}
  and is valid for all $n > k^2$. In particular, as for
  \eqref{eq:cfinite:exact:qp}, the right-hand side of \eqref{eq:Rk:exact} is a
  quasi-polynomial of degree $k - 1$ and quasi-period $\operatorname{lcm} (1, 2,
  \ldots, k)$.
  
  The reduction in Lemma~\ref{lem:Fk:denom:odd} implies that the maximum value
  of $r$ for $m = 2$ in \eqref{eq:Rk:exact} gets reduced by $1$. This lowers
  the quasi-period for $k = 3$ from $\operatorname{lcm} (2, 3) = 6$ to $3$ but does
  not affect the quasi-period for other values of $k$.
\end{proof}

As observed by Andrews \cite{andrews-pn-q-mod}, quasi-polynomials can always
be expressed via a single polynomial formula if we permit use of the floor
function. For instance, the piecewise formula in Example~\ref{eg:R3:qp} is
equivalent to the alternative representation
\begin{equation*}
  R_3 (n) = \frac{1}{3} (2 n - 3) (n - 7) + \frac{2}{3} \left\lfloor \frac{n
   + 1}{3} \right\rfloor + \frac{4}{3} \left\lfloor \frac{n}{3} \right\rfloor
\end{equation*}
for $n > 9$. The process of using computer algebra to automatically obtain
quasi-polynomial representations from an appropriate generating function is
described by Sills and Zeilberger \cite{sz-pmn-quasi}. The discussion in
\cite{sz-pmn-quasi} is focused on the examples of $p_m (n)$, the number of
partitions of $n$ into at most $m$ parts, as well as $D_k (n)$, the number of
partitions of $n$ whose Durfee square has size $k$. The approach, however,
applies equally to the numbers $R_k (n)$ in light of the generating function
provided by Theorem~\ref{thm:Fk:denom}.

\section{Conclusions and future work}

In Theorem~\ref{thm:Fk:denom} we showed that the generating function
$\mathcal{F}_k (q)$ of the number $R_k (n)$ of partitions of $n$ with Durfee
triangle of fixed size $k$ takes the form $q^{T_k} \varphi_k (q) / (q ; q)_k$.
This is derived from the $q$-multisum representation \eqref{eq:F:gf}. Neither
of these is as explicit as one would like for certain applications such as
extracting the asymptotic expansions of $R_k (n)$, as $n \rightarrow \infty$,
to higher order. For instance, computing the leading order asymptotics in
Theorem~\ref{thm:Rk:asy} required showing that the numerator polynomials
$\varphi_k (q)$ satisfy $\varphi_k (1) = 2^k$. It would be desirable to
determine the polynomials $\varphi_k (q)$ more explicitly.

By the relationship \eqref{eq:phik:alpha}, information on $\varphi_k (q)$ can
be inferred from corresponding information on the simpler polynomials
$\alpha_d (q)$ defined by \eqref{eq:Aq:denom}. The mentioned evaluations of
$\varphi_k (1)$, for instance, follow from the simpler evaluations $\alpha_d
(1) = 1$ that we showed in Corollary~\ref{cor:alphad:1}. More generally, it
appears worthwhile to investigate the values of the polynomials $\alpha_d (q)$
and $\varphi_k (q)$ at roots of unity. For instance, it appears that $\alpha_d
(- 1) = (- 1)^{\lfloor d / 2 \rfloor}$ for all integers $d \geq 0$. By
\eqref{eq:phik:alpha}, this implies the corresponding values
\begin{equation*}
  \varphi_k (- 1) = \left\{\begin{array}{ll}
     (- 2)^{k / 2}, & \text{if $k$ even,}\\
     0, & \operatorname{otherwise} .
   \end{array}\right.
\end{equation*}
The fact that $\varphi_k (- 1) = 0$ for odd $k$ follows from
Lemma~\ref{lem:Fk:denom:odd} according to which $1 + q$ divides $\varphi_k
(q)$ for odd $k$. Also, apart from these cases and the sporadic case
\begin{equation*}
  \alpha_4 (q) = (1 + q - q^4) (1 + 2 q - 2 q^4 - q^5 + q^8),
\end{equation*}
the polynomials $\alpha_d (q)$ and $\varphi_k (q)$ appear to be irreducible.

In a related direction, it would be of interest to obtain formulas for the
coefficients that arise in the (ordinary) partial fraction decomposition of
$\mathcal{F}_k (q)$ or its $q$-partial fraction decomposition in the sense of
Munagi \cite{munagi-qpartial} or, possibly, a partial fraction-type
expansion in the spirit of MacMahon \cite{sills-macmahon}. In the case of
$p_d (n)$, Rademacher \cite[Section~130]{rademacher1} considered the partial
fraction decomposition
\begin{equation*}
  \frac{1}{(q ; q)_d} = \sum_{k = 1}^d \sum_{\substack{
     0 \leq h < k\\
     (h, k) = 1
   }} \sum_{\ell = 1}^{\lfloor d / k \rfloor} \frac{C_{h, k, \ell}
   (d)}{(q - e^{2 \pi i h / k})^{\ell}}
\end{equation*}
and asked for formulas for the coefficients $C_{h, k, \ell} (d)$. Andrews
\cite{andrews-pn-q-mod} provided first instances of such formulas but they
are involved and not convenient for computations. Sills and Zeilberger
\cite{sz-rademacher} showed that certain coefficients $C_{h, k, \ell} (d)$
can be efficiently computed in a recursive manner. Rademacher conjectured that
the coefficients $C_{h, k, \ell} (d)$ converge as $d \rightarrow \infty$. This
was proved false by Drmota and Gerhold \cite{dg-rademacher} and,
independently, O'Sullivan \cite{osullivan-rademacher} after Sills and
Zeilberger \cite{sz-rademacher} provided strong numerical evidence that
Rademacher's conjecture is false. On the other hand, it remains an open
problem to clarify the relationship between the partial fraction
decompositions of $1 / (q ; q)_d$ and the limiting case $1 / (q ; q)_{\infty}$
which admits a version of a partial fraction decomposition by the
Hardy--Ramanujan--Rademacher formula for $p (n)$. For instance, as asked in
\cite{dg-rademacher}, do the coefficients $C_{h, k, \ell} (d)$ converge in a
weaker generalized sense as $d \rightarrow \infty$? These questions may also
be considered if one replaces (ordinary) partial fraction decompositions with
the $q$-partial fraction decomposition in the sense of Munagi
\cite{munagi-qpartial}. In the special case $h = k = 1$, Munagi
\cite{munagi-rademacher} proves that the analog of Rademacher's conjecture
actually holds true. It would be valuable to extend this analysis to other
coefficients as well as to similarly investigate the generating functions
studied in this paper.

Recall that every constant recursive integer sequence is necessarily
eventually periodic when reduced modulo an integer $M > 1$.
Corollary~\ref{cor:Rk:qp} allows us to bound the corresponding periods for
$R_k (n)$ modulo $M$. Namely, let $Q$ be the associated quasi-period (by
Corollary~\ref{cor:Rk:qp} we can always choose $Q = \operatorname{lcm} (1, 2, \ldots,
k)$; as well as $Q = 3$ if $k = 3$). Then we find
\begin{equation*}
  R_k (n + M Q) \equiv R_k (n) \pmod{M}
\end{equation*}
for all $n > k^2$, provided that $M$ is such that, for all $r \in \{ 0, 1,
\ldots, Q - 1 \}$, the values $R_k (m Q + r)$ are produced by polynomials in
$m$ with coefficients whose denominators are coprime to $M$ (these polynomials
necessarily have rational coefficients because they are integer-valued).

\begin{example}
  For instance, in the case $k = 3$, we find that $R_3 (n + 3 M) \equiv R_3
  (n)$ modulo $M$ for all $n > 9$. Likewise, $R_4 (n + 12 M) \equiv R_4 (n)$
  modulo $M$ for all $n > 16$ as well as $R_5 (n + 60 M) \equiv R_5 (n)$
  modulo $M$ for all $n > 25$. In the cases $k = 3$ and $k = 4$, these
  observations are also made in \cite{sharan-rook}. We note that in the
  cases $k \leq 5$ no restriction on $M$ is needed. On the other hand,
  for instance for $k = 6$, the congruences $R_6 (n + 60 M) \equiv R_6 (n)$
  modulo $M$ only hold for all $n > 36$ provided that $M$ is coprime to $3$.
\end{example}

Remarkably, much stronger congruences appear to hold in the case $M = 2$. As
observed in \cite{sharan-rook}, for $n > k^2$, the sequences $R_k (n)$
modulo $2$ are periodic with period $2$ if $k \in \{ 2, 3 \}$ and period $8$
if $k \in \{ 4, 5 \}$. Likewise, by explicitly computing the quasi-polynomial
representation in Corollary~\ref{cor:Rk:qp}, we find that the corresponding
period is $24$ if $k \in \{ 6, 7 \}$, $48$ if $k \in \{ 8, 9 \}$, and $480$ if
$k \in \{ 10, 11 \}$. It is natural to wonder whether the sequences $R_{2 k}
(n)$ and $R_{2 k + 1} (n)$ modulo $2$ always have the same (eventual) period.
We leave this question for future work. More generally, it would be of
interest to investigate further congruential properties of the numbers $R_k
(n)$, such as analogs of the Ramanujan congruences for the partition function.
Such investigations have been initiated and developed by Kronholm
\cite{kronholm-pnm}, \cite{kronholm-pnm-x} for the related numbers $p (n,
k)$ of partitions of $n$ into exactly $k$ parts. For recent subsequent work in
this direction, we refer to \cite{ekl-cranks-pnm} and the references
therein.

Finally, we wonder if it is fruitful to investigate the bivariate generating
function
\begin{equation*}
  \boldsymbol{F} (q, z) = \sum_{k = 0}^{\infty} F_k (q) z^k = \sum_{k =
   0}^{\infty} \sum_{n = 0}^{\infty} f_k (n) q^n z^k
\end{equation*}
in order to increase our understanding of the underlying numbers $R_k (n) =
f_k (n - T_k)$. In this direction, we note that, if we similarly write
\begin{equation*}
  \boldsymbol{A} (q, z) = \sum_{d = 0}^{\infty} A_d (q) z^d,
\end{equation*}
then Lemma \ref{lem:F:gf} becomes equivalent to the simple relationship
\begin{equation*}
  \boldsymbol{F} (q, z) =\boldsymbol{A} (q, q z) \boldsymbol{A} (q, z) .
\end{equation*}

\end{document}